\theoremstyle{plain}%
\newtheorem{theorem}{Theorem}[section]
\newtheorem{lemma}[theorem]{Lemma}
\newtheorem{proposition}[theorem]{Proposition}
\newtheorem{conjecture}[theorem]{Conjecture}
\newtheorem*{conjecture*}{Conjecture} 
\theoremstyle{definition}
\newtheorem{definition}[theorem]{Definition}
\theoremstyle{remark}
\newtheorem{remark}[theorem]{Remark} 
\DeclareMathOperator{\Var}{Var} 
\DeclareMathOperator{\diag}{diag}
\newcommand\pig[1]{\scalerel*[5.5pt]{\Big#1}{%
  \ensurestackMath{\addstackgap[1.5pt]{\big#1}}}}
\newcommand\pigl[1]{\mathopen{\pig{#1}}}
\newcommand\pigr[1]{\mathclose{\pig{#1}}}
\title{When are sequences of Boolean functions tame?} 
\author{Malin P. Forsstr\"om}
\address[Malin P. Forsstr\"om]{Department of Mathematics, KTH Royal Institute of Technology, 100 44 Stockholm, Sweden.}
\email{malinpf@kth.se} 
\keywords{Boolean functions}
\subjclass[2010]{60K99}
\begin{document}

\begin{abstract}
    In~\cite{js2006}, Jonasson and Steif conjectured that no non-degenerate sequence of transitive  Boolean functions \( (f_n)_{n \geq 1}\) with \( \lim_{n \to \infty} I(f_n)= \infty \) could be tame (with respect to some \( (p_n)_{n \geq 1} \)). In a companion paper~\cite{f}, the author showed that this conjecture in its full generality is false, by providing a counter-example for the case when, at the same time,  \( \lim_{n \to \infty} np_n = \infty \) and \( \lim_{n \to \infty} n^\alpha p_n = 0\) for some \( \alpha \in (0,1 ) \). In this paper we show that with slightly different assumptions, the conclusion of the conjecture holds when the sequence \( (p_n)_{n \geq 1}\) is bounded away from zero and one.
\end{abstract}

 \maketitle

\section{Introduction}

The aim of this paper is to show that with slightly different assumptions, a conjecture made by Jonasson and Steif in~\cite{js2006} is true. The results included in this paper complements the earlier paper~\cite{f} by the author, which shows that the conjecture, in its full generality, is false. 

Before presenting the conjecture and the main result of this paper, we will need some notation and definitions, which we now introduce. 
For each \( n \geq 1 \), fix some \( p_n \in (0,1) \) and let \( {X^{(n)} = (X_t^{(n)})_{t \geq 0}}\) be the continuous time \( p_n \)-biased random walk on the \( n \)-dimensional hypercube \( \{ 0,1 \}^n \) defined as follows.  For each \( i \in [n] \coloneqq \{1,2, \ldots, n \} \), let \( (X_t^{(n)}(i))_{t \geq 0} \) be the continuous time Markov chain on \( \{ 0,1 \} \) which at random times, distributed according to a rate one Poisson process, is assigned a new value, chosen according to   \(  (1-p_n) \delta_0  + p_n \delta   \), independently of everything else. For each \( t \geq 0 \), we let \( X_t^{(n)} \coloneqq \bigl( X_t^{(n)}(1), \ldots, X_t^{(n)}(n)\bigr)\). The unique stationary distribution of $(X^{(n)}_t)_{t\geq  0}$, denoted by  \( \pi_{n}\), is the   measure \( ((1-p_n) \delta_0  + p_n \delta_1 )^{\otimes n} \) on \( \{0,1 \}^n \). Throughout this paper, we will always assume that  \( X_0^{(n)}   \) is chosen with respect to this measure. When  \( t > 0 \) is small, the difference between \( X_0^{(n)} \) and \( X_t^{(n)} \) is often thought of as noise which describes a small proportion of the bits having been miscounted or corrupted.

A function \( f \colon \{ 0,1 \}^{n} \mapsto \{0,1\} \) will be referred to as a \emph{Boolean function}.  Some classical examples of Boolean functions are the \emph{Dictator function} \( \textsc{Dict}_n(x) \coloneqq x(1) \), the \emph{Majority function} \( {\textsc{Maj}_n(x) \coloneqq \mathds{1} \bigl(\sum_{i=1}^{n} x(i) \geq n/2  \bigr)} \) and the \emph{Parity function} \( {\textsc{Parity}_n(x) \coloneqq \mathds{1} \bigl( \sum_{i=1}^{n} x(i) \text{ is even} \bigr)}\)  (see, e.g.,~\cite{gs, odonnell}). 
In this paper we will in general be interested in sequences \( (f_n)_{n \in \mathcal{N}} \) of Boolean functions, with \( f_n \colon \{ 0,1 \}^n \to \{ 0,1 \} \).
Since it is sometimes not natural to require that a sequence of Boolean functions is defined for each \( n \in \mathbb{N} \), we only require that a sequence of Boolean functions is defined for \( n \) in an infinite increasing sub-sequence \( \mathcal{N} \) of \( \mathbb{N} \). Such sub-sequences of \( \mathbb{N} \) will be denoted by \( \mathcal{N} = \{ n_1,n_2, \ldots \} \), where \( 1 \leq n_1 < n_2 < \ldots \). To simplify notation, whenever we consider the limit of a sequence \( (y_{n_m})_{m \geq 1} \) and the dependency on \( \mathcal{N} \) is clear, we will abuse notation and write \( \lim_{n \to \infty} y_n \) instead of \( \lim_{m\to \infty} y_{n_m}\).  
 
One of the main objectives of~\cite{js2006} was to, given a sequence of Boolean functions \( (f_n)_{n \in \mathcal{N}}\), introduce notation which describes possible behaviours of \( \bigl(f_n(X_t^{(n)})\bigr)_{t \geq 0} \) for large \( n \). Two of these definitions, which will be relevant for the current paper, are given in the following definition.

\begin{definition}
    Let \( (f_n)_{n \in \mathcal{N}}\), \( f_n \colon \{ 0,1 \}^{n} \to \{ 0,1 \} \), be a sequence of Boolean functions. For \( n \in \mathcal{N} \), let \( C_{f_n}   \) denote the (random) number of times in \( (0,1) \) at which \( (f_n(X_t^{(n)}))_{t \geq 0} \) has changed its value, i.e., let  
    \begin{equation*}
        C_{f_n} \coloneqq  \lim_{N \to \infty}  \sum_{i=0}^{N-1} \mathds{1} \bigl({f_n(X^{(n)}_{i/N}) \neq f_n(X^{(n)}_{(i+1)/N})}\bigr).
    \end{equation*} 
    Then \( (f_n)_{n \in \mathcal{N}} \) is said to be
    \begin{enumerate}[(i)] 
	    \item \emph{tame} if   \( (C_{f_n})_{n\geq 1} \) is tight, that is for every \( \varepsilon > 0 \) there is \( k \geq 1 \) and \( n_0 \geq 1 \) such that 
	    \begin{equation*}
		    P(C_{f_n}\geq k) < \varepsilon \qquad \forall n \in \mathcal{N} \colon n \geq n_0
	    \end{equation*}
	    \item \emph{volatile} if \( C_{f_n} \Rightarrow \infty \) in distribution.
    \end{enumerate} 
\end{definition}

In~\cite{js2006}, the authors showed that a sequence of Dictator functions is tame and that a sequence of Parity functions is volatile, while a sequence of Majority functions is neither tame nor volatile. More generally, the authors proved that any noise sensitive sequence of Boolean functions  is volatile, while any sequence of Boolean functions which is tame is noise stable. As noted in~\cite{f}~and~\cite{pg}, there are many sequences of functions which are both noise stable and volatile, and hence the opposite does not hold.

We now describe a few additional properties which a Boolean function can have.
First, a Boolean function \( f \colon \{ 0,1 \}^{n} \to \{ 0,1 \} \) is said to be \emph{transitive} if for all \( i,j \in [n] \coloneqq \{ 1,2, \ldots, n \} \) there is a permutation \( \sigma \) of \( [n] \) which is such that (i) \( \sigma(i) = j \), and (ii) for all \( x \in \{ 0,1 \}^{n} \), if we define \( {\sigma(x) \coloneqq \bigl(x(\sigma(k))\bigr)_{k \in [n]}} \), then \( f(x) = f\bigl(\sigma(x)\bigr) \). We say that \( f \colon \{ 0,1 \}^n \to \{ 0,1 \} \) is \emph{increasing} if for all \( x,x' \in \{ 0,1 \}^n \) such that \( x(i) \leq x'(i) \) for all \(i \in [n] \), we have  \( f(x) \leq f(x') \). A sequence of Boolean functions \( (f_n)_{n \in \mathcal{N}} \) is said to be \emph{non-degenerate} if 
\begin{equation*}
    0 < \liminf_{n \to \infty } P\bigl(f_n(X_0^{(n)}) = 1\bigr) \leq \limsup_{n \to \infty } P\bigl(f_n(X_0^{(n)}) = 1\bigr)  < 1 .
\end{equation*} 

Given \( x \in \{ 0,1 \}^n \) and \( i \in [n] \), let \( R_i x \) denote the random element in \( \{ 0,1 \}^n \) obtained by resampling the \( i\)th bit of \( x \) according to \( (1-p_n)\delta_0 + p_n \delta_1 \). If \( f \colon \{ 0,1 \}^n \to \{ 0,1 \} \), we define the \emph{influence} of the \( i \)th bit on \( f \), by 
\begin{equation*}
    I_i(f) \coloneqq P\bigl(f(X_0^{(n)}) \neq f(R_i X_0^{(n)}) \bigr).
\end{equation*}
Note that this definition differs from the definition of influences in, e.g.,~\cite{gk2018},~\cite{lm2019}, and~\cite{odonnell}  by a factor \( 2p_n(1-p_n) \), but agrees with the analogue definitions given in, e.g.,~\cite{f2020} and~\cite{js2006}.
 If \( I_i(f) \) is the same for all \(i \in [n] \coloneqq \{ 1,2, \ldots, n \}\), then we say that \( f \) is \emph{regular}. The sum of the influences, \( I(f) \coloneqq \sum_{i \in [n]} I_i(f_n) \), is called the \emph{total influence} of \( f\).
 
In~\cite{js2006}, the authors show that a sufficient, but not necessary, condition for a non-degenerate sequence \( (f_n)_{n \in \mathcal{N}} \) of Boolean functions to be tame is that \( \sup_n \mathbb{E}[C_{f_n}]< \infty \) (or equivalently, that \( \sup_n I(f_n)< \infty \)). It is natural to ask if this condition is also necessary for some natural subset of the set of all sequences of Boolean functions. This motivated the following conjecture.
\begin{conjecture}[Conjecture~1.21 in~\cite{js2006}] \label{conjecture: the conjecture}
    If \( (f_n)_{n \in \mathcal{N}} \) is a non-degenerate sequence of transitive Boolean functions and \( \lim_{n \to \infty} \mathbb{E} [C_{f_n}] = \infty \), then \( (f_n)_{n \in \mathcal{N}} \) is not tame.
\end{conjecture}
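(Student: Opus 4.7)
The plan is to apply the second moment method directly to $C_{f_n}$. By Campbell's formula for the Poisson resampling times together with stationarity of $X^{(n)}$, each Poisson event at bit $i$ causes a change of $f_n$ with probability $I_i(f_n)$, so $\mathbb{E}[C_{f_n}] = \sum_{i=1}^n I_i(f_n) = I(f_n)$, which tends to $\infty$ by hypothesis. The aim is then to show $\mathbb{E}[C_{f_n}^2] = O(\mathbb{E}[C_{f_n}]^2)$, because Paley--Zygmund then gives
\begin{equation*}
    P\bigl(C_{f_n} \geq \tfrac12 \mathbb{E}[C_{f_n}]\bigr) \;\geq\; \frac{\mathbb{E}[C_{f_n}]^2}{4\,\mathbb{E}[C_{f_n}^2]} \;\geq\; c > 0,
\end{equation*}
contradicting tameness since $\tfrac12 \mathbb{E}[C_{f_n}] \to \infty$.

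To compute the second moment, I would decompose $C_{f_n} = \sum_{i=1}^n M_i$, where $M_i$ counts the bit-$i$ resamplings in $[0,1]$ that change the value of $f_n$. Transitivity makes the $M_i$ identically distributed and all ordered pairs $(M_i,M_j)$ with $i \neq j$ exchangeable, so $\Var(C_{f_n}) = n\Var(M_1) + n(n-1)\mathrm{Cov}(M_1,M_2)$. The diagonal term is easy: conditional on two distinct bit-$1$ Poisson events at times $s,t \in [0,1]$, the probability that both are change-events is at most $I_1(f_n)$ (the probability of either event alone), so $\mathbb{E}[M_1(M_1-1)] \leq I_1(f_n)$, whence $n \Var(M_1) = O(I(f_n)) = o(I(f_n)^2)$.

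The main work is in the off-diagonal term: one needs $n^2\, \mathrm{Cov}(M_1,M_2) = o(I(f_n)^2)$. Writing $\mathbb{E}[M_1M_2]$ as a double integral over the joint intensity of a bit-$1$ change at time $s$ and a bit-$2$ change at time $t$, and expanding in the $p_n$-biased Fourier basis of $f_n$, one obtains a sum over Fourier pairs $(S,T)$ with $1 \in S$, $2 \in T$, weighted by time-kernel factors that decay in the symmetric difference $|S \triangle T|$. Transitivity collapses these contributions into equivalence classes under the automorphism group of $f_n$, and the goal is to conclude by Parseval combined with spectral inequalities for transitive functions.

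The main obstacle is precisely this covariance expansion. When $p_n$ is bounded away from $0$ and $1$, the $p_n$-biased characters are $L^\infty$-bounded, hypercontractivity is available, and high-degree Fourier contributions can be controlled; in this regime one expects the plan to go through and deliver the result in the modified form treated in the paper. When $p_n \to 0$, however, the basis functions acquire large $L^\infty$ norms, and rare ``clumpy'' configurations can force two nearly simultaneous pivotal events at bits $1$ and $2$ to correlate heavily, inflating $\mathrm{Cov}(M_1,M_2)$. This is precisely the mechanism behind the counter-example of~\cite{f}, and explains why an additional assumption such as $0 \ll p_n \ll 1$ (or an analogous regularity condition) is genuinely needed to complete the argument.
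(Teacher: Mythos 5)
You are being asked to prove a statement that this paper (together with its companion \cite{f2020}) explicitly refutes: for \( n^{-1} \ll p_n \ll n^{-\alpha} \) there exist non-degenerate transitive sequences with \( \mathbb{E}[C_{f_n}] \to \infty \) that are nevertheless tame, so no proof of the conjecture as stated can exist. You correctly sense this at the end of your proposal, and your high-level strategy --- \( \mathbb{E}[C_{f_n}] = I(f_n) \to \infty \), then Paley--Zygmund once \( \mathbb{E}[C_{f_n}^2] = O(\mathbb{E}[C_{f_n}]^2) \) is established --- is exactly the skeleton of the paper's proof of the corrected Theorem~\ref{theorem: increasing gen p}. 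The route to the second moment differs, though: instead of your decomposition \( C_{f_n} = \sum_i M_i \) and exchangeability, the paper computes the moment generating function of \( C_f \) through the jump chain (Lemma~\ref{lemma: generating C II}), diagonalizes \( Q_n - I_n \) in the \( p_n \)-biased Fourier basis, and obtains the exact identity of Proposition~\ref{proposition: increasing nonsymmetric}. That identity is usable only because \( f \) is assumed \emph{increasing} (monotonicity enters in Lemma~\ref{lemma: edge matrix composition increasing}, where it lets one write the sensitivity as a linear, rather than quadratic, expression in the Fourier coefficients); the paper explicitly declines to handle the general non-monotone expression. Your proposal has no substitute for this hypothesis.

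The genuine gap is the off-diagonal covariance estimate, which you leave as a sketch, and it cannot be closed from transitivity and \( 0 \ll p_n \ll 1 \) alone --- not even for increasing functions. The Tribes sequence is increasing, transitive, non-tame, and has \( I(f_n) \asymp \log n \), so the inequality \( p_n(1-p_n)\, n \Var(f_n) \leq C I(f_n)^2 \) to which the paper's second-moment bound reduces (Proposition~\ref{proposition: the last lemma} and the remark following it) fails for it; a bare second-moment argument therefore cannot cover all transitive sequences even in the nice regime. The paper's essential additional ideas, absent from your proposal, are: (i) a dichotomy --- if \( (f_n) \) is noise sensitive it is volatile by Proposition~1.17 of \cite{js2006}, hence not tame, so one may assume it is not noise sensitive; (ii) the Keller--Kindler theorem (Theorem~7 of \cite{gk2018}), which then bounds \( \sum_i I_i(f_n)^2 \) below along a subsequence; and (iii) \emph{regularity} --- which is what replaces transitivity in the theorem actually proved --- converting that bound into \( I(f_n) \geq D\sqrt{n} \), so that the variance term \( 32\,p_n(1-p_n)\, n \Var(f_n) = O(n) \) is dominated by \( I(f_n)^2 \). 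Without these inputs your claim that ``one expects the plan to go through'' when \( p_n \) is bounded away from \( 0 \) and \( 1 \) is unsupported, and whether transitivity (rather than regularity) suffices there remains open.
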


\begin{remark}
    By the second remark after Conjecture~1.21 in~\cite{js2006}, there is a tame sequence of Boolean functions which satisfies all assumptions except non-degeneracy in Conjecture~\ref{conjecture: the conjecture}, and hence this assumption is necessary. 
\end{remark}

In~\cite{f2020}, a family of counter-examples to Conjecture~\ref{conjecture: the conjecture} was given. In detail, these examples provide counter-examples to Conjecture~\ref{conjecture: the conjecture} exactly when the sequence \( (p_n)_{n \geq 1} \) is such that \( {\lim_{n \to \infty} np_n = \infty} \) and \( \lim_{n \to \infty } np_n^r = 0 \) for some \( r \geq 2 \).  The assumption that \( \lim_{n \to \infty} np_n = \infty \) guarantees that, as \( n \to \infty ,\) the expected number of jumps made by  \( \smash{(X^{(n)})_{n \in \mathcal{N}}} \) in \( (0,1 ) \) tends to infinity. In particular, if \( \limsup_{n \to \infty} np_n < \infty \), then all sequences of Boolean functions will be tame. Consequently, the family of examples given in~\cite{f2020} show that Conjecture~\ref{conjecture: the conjecture} is false whenever the sequence \( (p_n)_{n \geq 1} \) tends to zero sufficiently fast, but still slowly enough for there to be interesting behaviour. 
In contrast to this result, the main objective of this paper is to show that when \( (p_n)_{n \geq 1} \) is bounded away from zero and one, any non-degenerate sequence of increasing and regular  Boolean functions is non-tame, and hence for \( (p_n)_{n\geq 1} \) in this range, a version of Conjecture~\ref{conjecture: the conjecture} holds.

\begin{theorem}\label{theorem: increasing gen p}
	If \( (p_n)_{n \geq 1} \) satisfies \( 0 < \liminf_{n \to \infty} p_n \leq \limsup_{n \to \infty} p_n < 1 \), and  \( (f_n)_{n \in \mathcal{N}} \) is a non-degenerate sequence of regular and increasing Boolean functions, then \((f_n)_{n \in \mathcal{N}} \) is not tame.
\end{theorem}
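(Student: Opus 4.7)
The plan is a second-moment / Paley--Zygmund argument. Since $\mathbb{E}[C_{f_n}] = I(f_n)$ tends to infinity by hypothesis, it suffices to establish
\begin{equation*}
    \mathbb{E}[C_{f_n}^2] \leq C \, \bigl(\mathbb{E}[C_{f_n}]\bigr)^2
\end{equation*}
uniformly in $n$, for a constant $C$ depending only on the bounds on $(p_n)$. Paley--Zygmund would then give $P(C_{f_n} \geq I(f_n)/2) \geq 1/(4C)$, and since $I(f_n) \to \infty$, this precludes tightness of $(C_{f_n})$ and hence tameness.

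To compute the second moment, I would decompose $C_{f_n} = \sum_{i \in [n]} W_i$, where $W_i$ is the number of flips of $(f_n(X_t^{(n)}))_{t \geq 0}$ in $[0,1]$ caused by resamplings of bit $i$. By the independent Poisson-clock structure of the bits and the Campbell--Mecke formula,
\begin{equation*}
    \mathbb{E}[C_{f_n}^2] = \mathbb{E}[C_{f_n}] + \sum_{i, j \in [n]} \int_0^1 \int_0^1 \lambda_{i,j}(s, t)\, ds\, dt,
\end{equation*}
where $\lambda_{i,j}(s,t)$ is the joint intensity of flips caused by bits $i$ and $j$. Since $f_n$ is increasing, a resampling of bit $i$ at time $s$ causes a flip precisely when the pivotality event $A_i = \{x \in \{0,1\}^n : f_n(x^{(i\to 1)}) \neq f_n(x^{(i \to 0)})\}$ holds at the state just before time $s$ and the new bit value differs from the old. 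Setting $g_i = \mathds{1}_{A_i}$ and applying Plancherel in the $p_n$-biased Fourier basis,
\begin{equation*}
    \lambda_{i,j}(s,t) = \bigl(2 p_n (1 - p_n)\bigr)^2 \sum_T \hat{g}_i(T)\, \hat{g}_j(T)\, e^{-|s-t|\,|T|} \qquad (i \neq j),
\end{equation*}
with the factor $e^{-|s - t|\,|T|}$ arising from the spectral decomposition of the chain. After summing over $i,j$ and integrating, the $T = \emptyset$ contribution reproduces exactly $I(f_n)^2$, so the remaining task is to bound $\sum_{T \neq \emptyset} \hat{N}(T)^2 \cdot 2/|T|$ by $O(I(f_n)^2)$, where $N = \sum_i g_i$ counts the number of pivotal bits.

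The main obstacle is this Fourier estimate. A crude Cauchy--Schwarz bound $\hat{N}(T)^2 \leq (n - |T|) \sum_{i \notin T} \hat{f}_n(T \cup \{i\})^2$ loses a factor of $n$ and is too weak when $I(f_n)$ grows substantially slower than $\sqrt{n}$ (as for iterated-majority-type functions). Overcoming this requires exploiting both the regularity of $f_n$---which makes $P(A_i)$ equal across $i$, so that $\hat{N}(\emptyset)^2 = n^2 P(A_1)^2$ is already comparable to $I(f_n)^2$ and sets the correct reference scale---and the monotonicity of $f_n$, via FKG-type correlation bounds and the Bonami--Beckner hypercontractive inequality in the $p_n$-biased setting, to tame the high-level Fourier tail. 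The hypothesis $0 \ll p_n \ll 1$ enters crucially here: both the normalization $p_n(1-p_n)$ and the hypercontractive constants stay bounded away from degeneracy only in this regime, and it is precisely their degeneracy as $p_n \to 0$ that permits the counter-examples of~\cite{f}.
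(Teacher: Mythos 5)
Your overall strategy (Paley--Zygmund plus a spectral second-moment computation) is the same as the paper's, and your Fourier formula for the joint flip intensity is essentially the paper's Lemma~\ref{lemma: not increasing nonsymmetric}. But there is a genuine gap exactly at the point you flag as ``the main obstacle,'' and the resolution you sketch does not work. You propose to prove the uniform bound \( \mathbb{E}[C_{f_n}^2] \leq C\,\mathbb{E}[C_{f_n}]^2 \) for \emph{every} non-degenerate regular increasing sequence by controlling the high-level Fourier tail with hypercontractivity and FKG. No such argument is given, and the target statement is not attainable in that generality: the Tribes function is increasing, transitive (hence regular) and non-degenerate with \( I(f_n) \asymp \log n \), and for it the quantity \( n\Var(f_n) \) that the crude Cauchy--Schwarz bound produces is of order \( n \gg I(f_n)^2 \). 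So the second-moment method alone cannot close the theorem; some input beyond monotonicity and regularity is needed to rule out the regime \( I(f_n) \ll \sqrt{n} \).

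The missing idea is a dichotomy on noise sensitivity. If \( (f_n) \) is noise sensitive, it is volatile by Proposition~1.17 of~\cite{js2006} and hence not tame --- no second-moment estimate is needed in this case. If \( (f_n) \) is \emph{not} noise sensitive, then the quantitative BKS-type theorem of Keller and Kindler (Theorem~7 in~\cite{gk2018}) gives \( \limsup_n \sum_i I_i(f_n)^2 > 0 \), and regularity converts this into \( I(f_n) \geq D\sqrt{n} \) along a subsequence. In that regime the ``crude'' bound you dismiss is in fact sufficient: expanding the square and using \( ab \leq a^2 + b^2 \) yields
\[
\mathbb{E}[C_{f_n}^2] \leq \mathbb{E}[C_{f_n}] + \mathbb{E}[C_{f_n}]^2 + 4\,\mathbb{E}[C_{f_n}] + 32\,p_n(1-p_n)\,n\Var(f_n),
\]
and since \( \Var(f_n) \leq 1 \) and \( n \leq I(f_n)^2/D^2 \), the right-hand side is \( O(\mathbb{E}[C_{f_n}]^2) \), so Paley--Zygmund applies. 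You should also note that your decomposition via pivotality indicators \( g_i = \mathds{1}_{A_i} \) glosses over the \( p_n \)-dependent reweighting (the new bit value must differ from the old), which is where the cross term \( (1-2p_n)|S|\hat f(S) \) in the paper's Lemma~\ref{lemma: edge matrix composition increasing} comes from; this is a bookkeeping issue rather than a conceptual one, but as written your intensity formula is not quite correct for \( p_n \neq 1/2 \).
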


\begin{remark}
	If we compare the assumptions on \( (f_n)_{n \in \mathcal{N}} \) in Theorem~\ref{theorem: increasing gen p} with the assumptions on  \( (f_n)_{n \in \mathcal{N}} \) in Conjecture~\ref{conjecture: the conjecture}, the property of being increasing is added, however, the requirement of transitivity is replaced with the assumption that \( f_n \) regular for each \( n \in \mathcal{N} \).
\end{remark}

\begin{remark}
	Very interestingly, the family of counter-examples to Conjecture~\ref{conjecture: the conjecture} given in~\cite{f2020} show that the conclusion of Theorem~\ref{theorem: increasing gen p} does not hold if the assumption that \( 0 < \liminf_{n \to \infty} p_n \leq \limsup_{n \to \infty} < 1 \) is replaced by the assumptions that \( \liminf_{n \to \infty} np_n = \infty \) and  \( \limsup_{n \to \infty}n^\alpha p_n < \infty \) for some \( \alpha \in (0,1) \). This behaviour mirrors similar discrepancies between the two regimes for \( (p_n)_{n\geq 1} \) which are present also for other results about Boolean functions (see, e.g.,~\cite{lm2019}). One reason to expect a difference in behaviour between these regimes is that the latter is exactly the regime for which there are non-degenerate sequences of transitive Boolean functions with finite sized witnesses.
\end{remark}

\begin{remark}\label{remark: influence and expected number of jumps}
    Using, e.g., Theorem~1 in~\cite{bkkkl}, one shows that any non-degenerate sequence \( (f_n)_{n \in \mathcal{N}} \) of regular Boolean functions  satisfies \( \lim_{n \to \infty} I(f_n) =\infty\), where \( I(f_n) \) is the so-called total influence of \( f_n \). By Proposition~1.19 in~\cite{js2006}, we have \( \mathbb{E}[C_f] = I(f) \), and hence the assumptions of Theorem~\ref{theorem: increasing gen p} guarantee that \( \lim_{n \to \infty}\mathbb{E}[C_{f_n}] = \infty \). We mention that by definition, if \( (f_n)_{n \in \mathcal{N}} \) is not tame, then this must hold. 
\end{remark}

\begin{remark}
    The proof of Theorem~\ref{theorem: increasing gen p} does not really require that \( f_n \) is regular for each \( n \in \mathcal{N} \), but rather that some positive proportion of the influences are of the same order and correspond to a positive proportion to the total influence. 
\end{remark}

The rest of this paper will be organised as follows. In Section~\ref{sec: background}, we introduce the Fourier-Walsh expansions of Boolean functions, which will be a crucial tool in the proof of Theorem~\ref{theorem: increasing gen p}. In Section~\ref{sec: fourier expression}, we present an expression for \( \mathbb{E}\bigl[C_f^2 \bigr] \) in terms of the Fourier-Walsh coefficients of \( f \). Finally, in Section~\ref{sec: proof of main result}, we give a proof of Theorem~\ref{theorem: increasing gen p}.

\section{Background and notations}\label{sec: background}

In this section, we will give a brief introduction to the Fourier-Walsh expansion of Boolean functions, and state and prove some results which will be useful to us. For a more thorough introduction to the of Fourier analysis to understand  properties of Boolean functions, we refer the reader to~\cite{odonnell}.

For the rest of this section, fix some \( n \geq 1 \) and assume that \( p_n \in (0,1 ) \) is given. To simplify notation, we let \( [n] \) denote the set \( \{ 1,2, \ldots, n \} \).

For functions \( f,g \colon \{ 0,1 \}^n \to \{ 0,1 \} \) and \( X_0^{(n)} \sim \pi_{n} \), we let 
\begin{equation*}
    \langle f,g \rangle \coloneqq \mathbb{E} \bigl[ f(X_0^{(n)}) g(X_0^{(n)}) \bigr].
\end{equation*}
Then \( \langle \cdot , \cdot \rangle \) is an inner product on the set of real-valued functions with domain \( \{ 0,1 \}^n  \).
For \( S \subseteq [n] \) and \( x \in \{ 0,1 \}^n \), define 
\begin{equation*}
     \chi_S(x) \coloneqq \prod_{i \in S}  \frac{x(i) - p_n}{\sqrt{p_n(1-p_n)}} .
\end{equation*}
Then \( \{ \chi_S \}_{S \subseteq [n]} \) is an orthonormal basis for the space of functions \( f \colon \{ 0,1 \}^n \to \mathbb{R} \), using the inner product \( \langle \cdot, \cdot \rangle  \) (see, e.g., Chapter 8.4 in~\cite{odonnell}). In other words, for any \( S,T \subseteq [n] \) we have
\begin{equation}\label{eq: orthonormality}
    \langle \chi_S, \chi_T \rangle = \mathds{1}(S = T).
\end{equation}
Here \( \mathds{1} \) is the indicator function, so that, e.g., \( \mathds{1}(S = T) \) is equal to \( 1 \) if \( S = T \) and equal to \( 0 \) else. Since \( \{ \chi_S \}_{S \subseteq [n]} \) is finite, any function \( f \colon \{ 0,1 \}^n \to \mathbb{R} \) has a unique decomposition
\begin{equation*}
    f(x) = \sum_{S \subseteq [n]} \hat f(S) \chi_S(x),\quad x \in \{ 0,1 \}^{n},
\end{equation*}
where \( \hat f(S) \) is given by \( \langle f , \chi_S \rangle.\) 
Moreover, noting that  for all \( x \in \{ 0,1 \}^n \) and all \( S,T \subset [n] \) we have
\begin{equation}\label{eq: prod of two basis vectors}
     \chi_S(x) \chi_T(x)  = \chi_{S \Delta T}(x)  \prod_{i \in S \cap T} \Bigl( 1 +  \frac{1-2p_n}{\sqrt{p_n(1-p_n)}} \cdot \chi_{\{ i \}}(x)\Bigr) 
\end{equation}
it follows that for any \( S,T,R \subseteq [n] \), 
\begin{equation}\label{eq: tri product expectation}
\begin{split}
    &\mathbb{E} \Bigl[\chi_S \bigl(X_0^{(n)} \bigr) \chi_T \bigl(X_0^{(n)} \bigr) \chi_R \bigl(X_0^{(n)}\bigr) \Bigr] 
    = \mathds{1} \bigl(S \Delta T \Delta R = S \cap T \cap R\bigr) \cdot \Bigl( \frac{1-2p_n}{\sqrt{p_n(1-p_n)}} \Bigr)^{\mathrlap{|S \cap T \cap R|}} . \phantom{{}^{|S \cap T \cap R|}}
\end{split}
\end{equation}  
To simplify the rest of the paper, we will abuse notation slightly and sometimes treat \( \pi_n \) and the functions in \( \{ \chi_S\}_{S \subseteq [n]} \)  as functions with domain \( \{ 0,1 \}^n \), and sometimes as real-valued vectors in \( \mathbb{R}^{\{ 0,1 \}^n} \) in the natural way. We let \( \mathbf{1} \coloneqq (1,1,\dots,1) \in  \mathbb{R}^{\{ 0,1 \}^n}  \), and note that \( \mathbf{1} = \chi_\emptyset \). Analogously, we let \( \mathbf{0} \coloneqq (0,0,\dots,0) \in  \mathbb{R}^{\{ 0,1 \}^n}  \).

For \( i \in [n]\), \( x \in \{ 0,1 \}^n \) and \( y \in \{ 0,1 \} \), let \( x^{i \mapsto y} \in \{ 0,1 \}^n \) be defined by
\begin{equation*}
    x^{i \mapsto y}(j) \coloneqq \begin{cases}
    y &\text{if } j=i \cr
    x(j)   &\text{if } j \neq i,
    \end{cases}
    \qquad j\in [n].
\end{equation*}
Using this notation, for each \( i \in [n] \) we define the differential operator \( D_i \) acting on functions \( f \colon \{ 0,1 \}^n \to \{ 0,1 \} \), by
 \begin{equation*}
     D_i f(x) \coloneqq   f(x^{i \mapsto 1}) - f(x^{i \mapsto 0}),\quad   x \in \{ 0,1 \}^n.
 \end{equation*}
In the next lemma, we use the Fourier-Walsh expansion to describe how these differential operators act on Boolean functions.
 \begin{lemma}\label{lemma: Partial i f}
For any function \( f \colon \{ 0,1 \}^n \to \{ 0,1 \} \), \( i \in [n] \) and \( x \in \{0,1 \}^n \), we have
\begin{equation*}
    D_if(x) 
     = \frac{1}{\sqrt{p_n(1-p_n)}} \sum_{T \subseteq [n] \colon i \not \in T} \hat f \bigl(T \cup \{ i \} \bigr) \chi_T(x).
\end{equation*}
\end{lemma}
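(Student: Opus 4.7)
The plan is to prove this by linearity, applying $D_i$ to each Fourier-Walsh basis vector $\chi_S$ separately. First I would expand $f$ in its Fourier-Walsh decomposition $f(x) = \sum_{S \subseteq [n]} \hat f(S)\, \chi_S(x)$. Since $D_i$ is linear in $f$, it suffices to compute $D_i \chi_S(x)$ for each $S \subseteq [n]$.

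Next I would split into two cases depending on whether $i \in S$. If $i \notin S$, then the formula for $\chi_S$ is a product over indices in $S$ only, so $\chi_S(x^{i \mapsto 1}) = \chi_S(x^{i \mapsto 0})$ and hence $D_i \chi_S(x) = 0$; these terms make no contribution. If $i \in S$, I would factor out the $i$th coordinate, writing
\begin{equation*}
    \chi_S(x) = \frac{x(i) - p_n}{\sqrt{p_n(1-p_n)}} \cdot \chi_{S \setminus \{i\}}(x),
\end{equation*}
where $\chi_{S \setminus \{i\}}(x)$ does not depend on $x(i)$. Substituting $x(i) = 1$ and $x(i) = 0$ and subtracting, the factor in front becomes $\frac{(1-p_n) - (-p_n)}{\sqrt{p_n(1-p_n)}} = \frac{1}{\sqrt{p_n(1-p_n)}}$, so
\begin{equation*}
    D_i \chi_S(x) = \frac{1}{\sqrt{p_n(1-p_n)}} \, \chi_{S \setminus \{i\}}(x).
\end{equation*}

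Finally I would reassemble the pieces: summing over all $S \subseteq [n]$ and keeping only those with $i \in S$, and then reindexing via $T = S \setminus \{i\}$ (so $T$ ranges over all subsets of $[n]$ not containing $i$ and $S = T \cup \{i\}$), gives
\begin{equation*}
    D_i f(x) = \sum_{S \subseteq [n]} \hat f(S) D_i \chi_S(x) = \frac{1}{\sqrt{p_n(1-p_n)}} \sum_{T \subseteq [n] \colon i \notin T} \hat f(T \cup \{i\}) \, \chi_T(x),
\end{equation*}
which is exactly the desired identity. There is no real obstacle here; the only step that requires any care is the cancellation of the $-p_n$ terms when evaluating $\chi_S(x^{i \mapsto 1}) - \chi_S(x^{i \mapsto 0})$, which produces the prefactor $1/\sqrt{p_n(1-p_n)}$ rather than something depending on $p_n$.
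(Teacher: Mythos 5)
Your proof is correct and follows essentially the same route as the paper: expand \( f \) in the Fourier--Walsh basis, use linearity of \( D_i \), observe that terms with \( i \notin S \) cancel, and for \( i \in S \) factor out the \( i \)th coordinate to obtain the prefactor \( \frac{(1-p_n)-(-p_n)}{\sqrt{p_n(1-p_n)}} = \frac{1}{\sqrt{p_n(1-p_n)}} \) before reindexing via \( T = S \setminus \{i\} \). No gaps.
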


For a proof of Lemma~\ref{lemma: Partial i f}, see, e.g., Section~8.4 in~\cite{odonnell}.

Before closing this section, we mention that it is easy to verify that (see, e.g., \cite[Proposition~8.16]{odonnell}),
\begin{equation*}
	\Var f_n = \sum_{S \subseteq [n] \colon S \neq \emptyset} \hat f_n(S)^2,
\end{equation*} 
and similarly (see, e.g., \cite[Proposition 8.23]{odonnell}), that
\begin{equation*}
	I(f_n) = \sum_{S \subseteq [n] \colon S \neq \emptyset} |S|\hat f_n(S)^2.
\end{equation*}

\section{An expression for  \texorpdfstring{\( \mathbb{E}[C_f^2] \)}{the second moment of Cf} using the Fourier coefficients} \label{sec: fourier expression}

The main goal of this section is to give a proof of the following proposition, which will be crucial in the proof of Theorem~\ref{theorem: increasing gen p}.
\begin{proposition}\label{proposition: increasing nonsymmetric}
    Let \( f \colon \{ 0,1 \}^n \to \{ 0,1 \}\) be increasing. Then
    \begin{align*}
        \mathbb{E} [C_f^2]    
        &=   
        \mathbb{E}[C_f] + \mathbb{E}[C_f]^2 
        \\&\qquad +
        \sum_{\substack{S \subseteq [n] \colon \\ S \neq \emptyset }} \frac{e^{-|S|}-(1-|S|)}{|S|^2}        \Bigl[  (1-2p_n)  |S| \hat f(S )  
        + 2\sqrt{p_n(1-p_n)} \sum_{i \in [n] \colon i \not \in S} \hat f \bigl(S \cup \{ i \} \bigr)  
        \Bigr]^2 .
    \end{align*}
\end{proposition}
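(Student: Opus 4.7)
The plan is to count jumps by tagging each Poisson resampling event, then compute $\mathbb{E}[C_f(C_f-1)]$ using the multivariate Mecke (Palm) formula together with the Fourier--Walsh basis. For each $i\in[n]$, let $N_i$ be the rate-one Poisson process of resampling times of bit $i$ (independent across $i$), at each point of which a fresh Bernoulli$(p_n)$ value is drawn. Because $f$ is increasing, $D_if\in\{0,1\}$, and a jump of $f(X_t)$ at $\tau\in N_i$ occurs exactly when $X_{\tau^-}(i)\neq X_\tau(i)$ and $D_if(X_{\tau^-})=1$. Setting $h_i(\tau):=\mathds{1}(X_{\tau^-}(i)\neq X_\tau(i))\,D_if(X_{\tau^-})$, we have $C_f=\sum_i\sum_{\tau\in N_i\cap(0,1)}h_i(\tau)$, and since $h_i^2=h_i$ the Mecke formula yields
\[
\mathbb{E}[C_f^2]=\mathbb{E}[C_f]+\sum_{i,j\in[n]}\int_0^1\!\!\int_0^1 \mathbb{E}^{\mathrm{Palm}}[h_i(s)h_j(t)]\,ds\,dt,
\]
where the Palm expectation conditions on $(i,s)$ and $(j,t)$ being in the respective processes.

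The core computation is of this integrand for $s<t$. First condition $h_j(t)$ on the history $\mathcal{F}_{t^-}$: writing $A=X_{t^-}(j)$ and $A'=X_t(j)$ (independent Bernoulli$(p_n)$ at stationarity), the identities
\[
\mathbb{E}[\mathds{1}(A\neq A')]=2p_n(1-p_n),\qquad \mathbb{E}[\mathds{1}(A\neq A')\chi_{\{j\}}(X_{t^-})]=(1-2p_n)\sqrt{p_n(1-p_n)}
\]
together with Lemma~\ref{lemma: Partial i f} and the product rule $\chi_{\{j\}}\chi_T=\chi_{T\cup\{j\}}$ for $j\notin T$ give the Fourier expansion
\[
\mathbb{E}[h_j(t)\mid\mathcal{F}_{t^-}]=\sum_{S\subseteq[n]}B_{S,j}\,\chi_S(X_{t^-}),
\]
with $B_{S,j}=2\sqrt{p_n(1-p_n)}\hat f(S\cup\{j\})$ if $j\notin S$ and $B_{S,j}=(1-2p_n)\hat f(S)$ if $j\in S$. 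The crucial observation is that the bracketed quantity in the statement of the proposition is exactly $B_S:=\sum_{j\in[n]}B_{S,j}$.

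Next, use the eigenvalue relation $\mathbb{E}[\chi_S(X_{t^-})\mid X_s]=e^{-|S|(t-s)}\chi_S(X_s)$ for the $p_n$-biased random walk (valid under the Palm law as well, since the added point at $t$ does not affect the chain on $[s,t)$), and then compute $\mathbb{E}^{\mathrm{Palm}}[h_i(s)\chi_S(X_s)]$ by an analogous decomposition of $\chi_S(X_s)$ according to whether $i\in S$, combined with the orthogonality of $\{\chi_T\}_{T\not\ni i}$. This yields $\mathbb{E}^{\mathrm{Palm}}[h_i(s)\chi_S(X_s)]=B_{S,i}$, so that
\[
\mathbb{E}^{\mathrm{Palm}}[h_i(s)h_j(t)]=\sum_{S}B_{S,i}B_{S,j}\,e^{-|S|(t-s)}.
\]
Summing over $i,j$ collapses this to $\sum_{S}B_S^2\,e^{-|S|(t-s)}$. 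The $S=\emptyset$ piece integrates to $B_\emptyset^2=\mathbb{E}[C_f]^2$, while each $S\neq\emptyset$ term contributes $B_S^2$ times the integral of $e^{-|S||s-t|}$ over $(0,1)^2$, producing the coefficient in the claim.

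The main obstacle is the bookkeeping showing that the same coefficients $B_{S,i}$ appear both in the Fourier expansion of $\mathbb{E}[h_j(t)\mid\mathcal{F}_{t^-}]$ and in the projection $\mathbb{E}^{\mathrm{Palm}}[h_i(s)\chi_S(X_s)]$; this hidden symmetry is what makes the final expression factor as the single square $B_S^2$ rather than a bilinear form. The case $i=j$ requires no separate argument, since the eigenvalue identity automatically averages over any intermediate resamplings of bit $i$ in $(s,t)$.
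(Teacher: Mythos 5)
Your argument is correct, and it reaches the expansion by a genuinely different route than the paper. The paper Poissonizes over the total number of resamplings, writes the moment generating function of \(C_f\) in terms of the jump-chain matrices \(Q_n\) and \(Q_{\partial f}\) (Lemma~\ref{lemma: generating C II}), differentiates twice (Lemma~\ref{lemma: moments II}), and then diagonalizes \(Q_n-I_n\) in the basis \(\{\chi_S\}\) (Lemma~\ref{lemma: not increasing nonsymmetric}); you instead stay in continuous time and use the bivariate Mecke/Palm formula together with the semigroup identity \(\mathbb{E}[\chi_S(X_t)\mid X_s]=e^{-|S|(t-s)}\chi_S(X_s)\). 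The two derivations hinge on the same two Fourier facts, which you correctly isolate: because \(f\) is increasing, \(D_if\in\{0,1\}\), so the jump intensity expands \emph{linearly} in the \(\hat f(S)\) (your coefficients \(B_{S,j}\) are exactly the content of Lemma~\ref{lemma: edge matrix composition increasing}), and reversibility makes the same coefficients appear at both time points (your "hidden symmetry" \(\mathbb{E}^{\mathrm{Palm}}[h_i(s)\chi_S(X_s)]=B_{S,i}\) is the paper's Lemma~\ref{lemma: edge matrix equality}; I checked that the \(i\in S\) case works because the extra term involves \(\langle\chi_{T\cup\{i\}},\chi_{S\setminus\{i\}}\rangle=0\)). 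Your route buys a cleaner probabilistic interpretation of where the kernel \(e^{-|S||s-t|}\) comes from, at the cost of invoking the two-point Palm calculus, which you should state carefully (in particular that adding points at \(s\) and \(t\) does not bias the chain on \([s,t)\)). One numerical point deserves attention: \(\int_0^1\!\int_0^1 e^{-|S||s-t|}\,ds\,dt = \tfrac{2\bigl(e^{-|S|}-(1-|S|)\bigr)}{|S|^2}\), i.e.\ \emph{twice} the coefficient printed in the Proposition. This factor \(2\) agrees with the paper's own Lemma~\ref{lemma: not increasing nonsymmetric} (the \(2\) is silently dropped in the paper's final substitution), so your method in fact recovers the correct constant; just do not claim that the double integral equals the printed coefficient as stated.
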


In order to give a proof of this result, we will first introduce some additional notation. After this, we state and prove a number of lemmas from which the claim of Proposition~\ref{proposition: increasing nonsymmetric} will follow.  

For the rest of this section, assume that \( n \geq 1 \) and \( p_n \in (0,1) \) is given. 
For \( i \in [n]\) and \( x \in \{ 0,1 \}^n \), define
\begin{equation*}
    (x \oplus e_i)(j) \coloneqq \begin{cases}
    1-x &\text{if } j=i \cr
    x(j)   &\text{if } j \neq i,
    \end{cases}
    \qquad j\in [n].
\end{equation*}
We now define two matrices which will be useful throughout the rest of this section. Let \(  Q_n \) be the transition matrix of the discrete time Markov chain indexed by the resampling times of \( (X_t^{(n)})_{t \geq 0} \), i.e.,  for \( x,y \in \{ 0,1 \}^n \) let
  \begin{equation*}
  Q_n(x,y) \coloneqq
  \begin{cases}
       \frac{1}{n} \cdot \sum_{i=1}^n \bigl( (1-p_n) \mathds{1}(x(i)=0) + p_n \mathds{1}(x(i)=1) \bigr) &\text{if } x= y,\cr
       \frac{1}{n} \cdot \bigl( p_n \mathds{1}(x(i)=0) + (1-p_n) \mathds{1}(x(i)=1) \bigr) & \text{if }  y = x \oplus e_i \text{ for some } i \in [n],  \cr
        0 & \text{else.}
    \end{cases}
 \end{equation*}
 Since for any \( x \in \{ 0,1 \}^n \) and \( i \in [n]\) we have \( \mathds{1}(x(i)=1) = x(i) \) and \( \mathds{1}(x(i)=0) = 1-x(i) \), the matrix \( Q_n \) can equivalently be defined by
   \begin{equation*}
   Q_n(x,y) \coloneqq
  \begin{cases}
       \frac{1}{n} \cdot \sum_{i=1}^n \bigl( (1-p_n) (1-x(i)) + p_n x(i) \bigr) &\text{if } x= y, \cr
       \frac{1}{n} \cdot \bigl( p_n (1-x(i))  + (1-p_n) x(i) \bigr)  & \text{if }  y = x \oplus e_i \text{ for some } i \in [n],  \cr
      0 & \text{else.} 
    \end{cases}
 \end{equation*}
One can easily show  that the functions \( \{ \chi_S\}_{S \subseteq [n]} \) are eigenvectors of \( P_n \), and that if we let \( I_n \) denote the \( |\{ 0,1 \}^n| \)-dimensional identity matrix, we have
\begin{equation}\label{eq: eigenvalues}
    (Q_n-I_n) \chi_S =  -\frac{|S|}{n}\chi_{S  } .
\end{equation}  

Next, for each function \( f \colon \{ 0,1 \}^n \to \{ 0,1 \} \), we  define the matrices \( Q_{\partial f} \) and \( Q_f\) by
\begin{equation*}
    Q_{\partial f}(x,y) \coloneqq Q_n(x,y) \cdot \mathds{1}\bigl(f(x) \neq f(y)\bigr), \quad x,y \in \{ 0,1 \}^n
\end{equation*}
and 
\begin{equation*}
    Q_f \coloneqq Q_n - Q_{\partial f}.
\end{equation*} 
When \( p_n = 1/2 \), the function \( Q_{\partial f} \mathbf{1} \colon \{ 0,1 \}^n \to \mathbb{R} \) is exactly equal to the so-called \emph{sensitivity} of the function \( f \), sometimes denoted by \( h_f \) (see, e.g.,~\cite{eg} and~\cite{odonnell}). When \( p_n \neq 1/2 \), the function \( Q_{\partial f} \mathbf{1}\colon \{ 0,1 \}^n \to \mathbb{R}  \) can be thought of as a weighted analog of this function. 

Given \( f \colon \{ 0,1 \}^n \) and \( i \in [n]\), recall that we have defined \( I_i(f) \) to be the probability that \( f(X_0^{(n)}) \) changes when we re-randomize the \( i \)th bit of \( X_0^{(n)} \). In other words, we have
\begin{equation*} 
      I_i(f) =   \mathbb{E} \Bigl[   \pigl( \mathds{1}\bigl(X_0^{(n)}(i)=0\bigr) \cdot p_n + \mathds{1}\bigl(X_0^{(n)}(i)=1\bigr) \cdot (1-p_n) \pigr) \bigl(D_i f(X_0^{(n)})\bigr)^2 \Bigr] .
\end{equation*} 
Summing over all \( i \in [n]\), we obtain
\begin{equation}\label{eq: influence equation}
\begin{split}
      &I(f) =  \sum_{i \in [n]} \mathbb{E} \Bigl[   \pigl( \mathds{1}\bigl(X_0^{(n)}(i)=0\bigr) \cdot p_n + \mathds{1}\bigl(X_0^{(n)}(i)=1\bigr) \cdot (1-p_n) \pigr) \bigl(D_i f(X_0^{(n)})\bigr)^2 \Bigr] 
      = \pi_{n}^T Q_{\partial f} \mathbf{1}. 
\end{split}
\end{equation}  

 We will later be interested in how the  matrices \( Q_{\partial f} \) and  \( Q_f \)  acts on the functions in \( \{ \chi_S \}_{S \subseteq [n]} \). The first step in this direction is the following result.
 \begin{lemma}\label{lemma: edge matrix equality}
	Let \( f \colon \{ 0,1 \}^n \to \{ 0,1 \} \) and \( S \subseteq [n] \). Then
 	\begin{equation}\label{eq: exp II}
 		\pi_{n}^T Q_{\partial f} \chi_S
      	= 
     	\bigl\langle Q_{\partial f} \mathbf{1} , \chi_S \bigr\rangle.
	\end{equation}
\end{lemma}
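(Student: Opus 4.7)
The plan is to prove the identity by appealing to the reversibility of \(Q_{\partial f}\) with respect to \(\pi_n\). The right-hand side is a diagonal sum \(\sum_x \pi_n(x)\chi_S(x)(Q_{\partial f}\mathbf{1})(x)\), while the left-hand side is an off-diagonal sum \(\sum_{x,y}\pi_n(x)Q_{\partial f}(x,y)\chi_S(y)\), and the two will match as soon as we can swap the roles of \(x\) and \(y\) inside the sum via detailed balance.

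First I would verify that the jump matrix \(Q_n\) itself satisfies detailed balance with respect to the product measure \(\pi_n\). The only non-trivial case is when \(y=x\oplus e_i\) for some \(i\in[n]\); say \(x(i)=0\) and \(y(i)=1\). Then \(\pi_n(y)/\pi_n(x) = p_n/(1-p_n)\), and the definition of \(Q_n\) gives \(Q_n(x,y)=p_n/n\) and \(Q_n(y,x)=(1-p_n)/n\), so
\begin{equation*}
\pi_n(x)Q_n(x,y) = \pi_n(x)\cdot\frac{p_n}{n} = \pi_n(y)\cdot\frac{1-p_n}{n} = \pi_n(y)Q_n(y,x).
\end{equation*}
The case \(x(i)=1\), \(y(i)=0\) is symmetric, and the diagonal case is trivial.

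Next, since the indicator \(\mathds{1}(f(x)\neq f(y))\) is symmetric in \((x,y)\), the matrix \(Q_{\partial f}(x,y) = Q_n(x,y)\mathds{1}(f(x)\neq f(y))\) inherits detailed balance, i.e.\ \(\pi_n(x)Q_{\partial f}(x,y)=\pi_n(y)Q_{\partial f}(y,x)\) for all \(x,y\in\{0,1\}^n\). Using this identity, I would compute
\begin{align*}
\pi_n^T Q_{\partial f}\chi_S
&= \sum_{x,y} \pi_n(x)\,Q_{\partial f}(x,y)\,\chi_S(y) \\
&= \sum_{x,y} \pi_n(y)\,Q_{\partial f}(y,x)\,\chi_S(y) \\
&= \sum_{y} \pi_n(y)\,\chi_S(y)\sum_x Q_{\partial f}(y,x) \\
&= \sum_{y} \pi_n(y)\,\chi_S(y)\,(Q_{\partial f}\mathbf{1})(y)
= \bigl\langle Q_{\partial f}\mathbf{1},\chi_S\bigr\rangle,
\end{align*}
which is the desired identity.

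The argument is essentially bookkeeping once reversibility is in hand; there is no real obstacle. The only point worth flagging is that reversibility of \(Q_n\) (and thus of \(Q_{\partial f}\)) depends on the product structure of \(\pi_n\), which is exactly why the single-bit resampling dynamics were chosen in the first place; in a non-product setting this identity would not hold for free.
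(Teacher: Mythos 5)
Your proof is correct and follows essentially the same route as the paper's: the paper performs the change of variables \(x \mapsto x \oplus e_i\) inside the double sum and verifies exactly the identity \(\pi_n(x\oplus e_i)\bigl(p_n(1-(x\oplus e_i)(i)) + (1-p_n)(x\oplus e_i)(i)\bigr) = \pi_n(x)\bigl(p_n(1-x(i)) + (1-p_n)x(i)\bigr)\), which is the detailed balance relation you isolate. Your packaging of this as reversibility of \(Q_n\) (inherited by \(Q_{\partial f}\) via the symmetry of the indicator) is just a cleaner phrasing of the same computation.
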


 \begin{proof} 
 	Let \( D_{\pi_{n}} \) be the diagonal matrix with \( \diag D_{\pi_{n}} = \pi_n. \)
 	Then, since \( X^{(n)} \) is reversible, the matrix \( D_{\pi_n}^{1/2} Q_n D_{\pi_n}^{-1/2} \) is symmetric, and hence it immediately follows that \( D_{\pi_n}^{1/2} Q_{\partial f} D_{\pi_n}^{-1/2} \) is also symmetric. 
 	Using this observation, we obtain
 	\begin{equation*}
 		\begin{split}
 			&\langle  Q_{\partial f} \mathbb{1},\chi_S \rangle = (Q_{\partial f} \mathbb{1})^T D_{\pi_n} \chi_S
 			=
 			\mathbb{1}^T Q_{\partial f}^T D_{\pi_n} \chi_S
 			=
 			\mathbb{1}^T D_{\pi_n}^{1/2} (D_{\pi_n}^{1/2}Q_{\partial f} D_{\pi_n}^{-1/2})^T  D_{\pi_n}^{1/2} \chi_S
 			\\&\qquad
 			=
 			\mathbb{1}^T D_{\pi_n}^{1/2} (D_{\pi_n}^{1/2}Q_{\partial f} D_{\pi_n}^{-1/2})  D_{\pi_n}^{1/2} \chi_S
 			%=
 			%\mathbb{1}^T D_{\pi_n}^{1/2} D_{\pi_n}^{1/2}Q_{\partial f} D_{\pi_n}^{-1/2}  D_{\pi_n}^{1/2} \chi_S
 			=
 			\mathbb{1}^T D_{\pi_n} Q_{\partial f} \chi_S
 			=
 			\pi_n^T Q_{\partial f} \chi_S
 		\end{split}
 	\end{equation*}
 	as desired. 
 \end{proof}

In the next lemma, we give an expression for \( \pi_n^T Q_{\partial f} \chi_S \) in terms of the Fourier coefficients of~\( f \).
\begin{lemma}\label{lemma: edge matrix composition increasing}
For any increasing function \( f \colon \{ 0,1 \}^n \to \{ 0,1 \} \) and \( S \subseteq [n] \), we have
\begin{equation} \label{eq: fourier expansion}
    \pi_n^T Q_{\partial f} \chi_S
    =  \frac{1}{n} \biggl[ (1-2p_n)  |S| \hat f(S  )  +  2\sqrt{p_n(1-p_n)}
       \sum_{i \in [n] \colon i\not \in S}  \hat f \bigl(S \cup \{ i \} \bigr)
     \biggr].
    \end{equation}
\end{lemma}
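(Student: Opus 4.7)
The plan is to use Lemma~\ref{lemma: edge matrix equality} to reduce the claim to computing the inner product \( \langle Q_{\partial f} \mathbf{1}, \chi_S \rangle \), and then to expand \( Q_{\partial f} \mathbf{1} \) in the orthonormal basis \( \{\chi_T\}_{T \subseteq [n]} \) using Lemma~\ref{lemma: Partial i f} and equation~\eqref{eq: prod of two basis vectors}.

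First I would write out \( Q_{\partial f} \mathbf{1}(x) \) explicitly from the definition of \( Q_n \), obtaining
\begin{equation*}
    Q_{\partial f} \mathbf{1}(x) = \frac{1}{n} \sum_{i \in [n]} \bigl( p_n(1-x(i)) + (1-p_n) x(i) \bigr) \mathds{1}\bigl( f(x) \neq f(x \oplus e_i)\bigr).
\end{equation*}
Here is where the \emph{increasing} hypothesis enters, which I expect to be the only conceptual step: when \( f \) is increasing, \( D_i f(x) \in \{0,1\} \) does not depend on \( x(i) \), and
\begin{equation*}
    \mathds{1}\bigl(f(x) \neq f(x \oplus e_i)\bigr) = D_i f(x),
\end{equation*}
regardless of whether \( x(i) = 0 \) or \( x(i) = 1 \). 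This replaces a combinatorial indicator with a quantity for which we already have a clean Fourier expansion via Lemma~\ref{lemma: Partial i f}.

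Next, I would expand the weight \( w_i(x) \coloneqq p_n(1-x(i)) + (1-p_n) x(i) \) in the Fourier basis. Using \( x(i) = p_n + \sqrt{p_n(1-p_n)} \, \chi_{\{i\}}(x) \), a short computation gives
\begin{equation*}
    w_i(x) = 2 p_n(1-p_n) + (1-2p_n) \sqrt{p_n(1-p_n)} \, \chi_{\{i\}}(x).
\end{equation*}
Multiplying this with the expansion \( D_i f(x) = (p_n(1-p_n))^{-1/2} \sum_{T \not\ni i} \hat f(T \cup \{i\}) \chi_T(x) \) from Lemma~\ref{lemma: Partial i f}, and using that \( \chi_{\{i\}}(x) \chi_T(x) = \chi_{T \cup \{i\}}(x) \) whenever \( i \notin T \) (which follows from~\eqref{eq: prod of two basis vectors} since the intersection is empty), I obtain
\begin{equation*}
    n \cdot Q_{\partial f} \mathbf{1}(x) = \sum_{i \in [n]} \sum_{T \subseteq [n] \colon i \notin T} \hat f(T \cup \{i\}) \Bigl[ 2\sqrt{p_n(1-p_n)} \, \chi_T(x) + (1-2p_n) \chi_{T \cup \{i\}}(x) \Bigr].
\end{equation*}

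Finally, I would take the inner product with \( \chi_S \) and use the orthonormality relation~\eqref{eq: orthonormality}. The first summand contributes only when \( T = S \), which forces \( i \notin S \) and yields \( 2\sqrt{p_n(1-p_n)} \sum_{i \notin S} \hat f(S \cup \{i\}) \). The second summand contributes only when \( T \cup \{i\} = S \) with \( i \notin T \), i.e.\ when \( i \in S \) and \( T = S \setminus \{i\} \), producing \( |S| \) copies of \( \hat f(S) \) weighted by \( (1-2p_n) \). Combining these with Lemma~\ref{lemma: edge matrix equality} gives exactly~\eqref{eq: fourier expansion}.
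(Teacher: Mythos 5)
Your proposal is correct and follows essentially the same route as the paper's proof: reduce to \( \langle Q_{\partial f}\mathbf{1}, \chi_S\rangle \) via Lemma~\ref{lemma: edge matrix equality}, use monotonicity to replace the indicator \( \mathds{1}(f(x)\neq f(x\oplus e_i)) \) by \( D_if(x) \), expand the weight \( w_i \) and \( D_if \) in the \( \chi_T \) basis, and extract the \( \chi_S \) coefficient by orthonormality. The only cosmetic difference is that the paper re-indexes the second sum over \( T\cup\{i\} \) before taking the inner product, whereas you match \( T\cup\{i\}=S \) directly; these are equivalent.
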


\begin{remark}
The proof of Lemma~\ref{lemma: edge matrix composition increasing} is the only part of the proof of our main result that directly requires that each function in the sequence \( (f_n)_{n \geq 1} \) is increasing. For comparison, one can show that the analogue of~\eqref{eq: fourier expansion} without this assumption is given by
\begin{equation*} 
\begin{split}
    &\pi_n^T Q_{\partial f} \chi_S
    =  \frac{\sqrt{p_n(1-p_n)}}{n} \!\!\!\! \sum_{\substack{T,T' \subseteq [n] \colon \\ T \Delta T' \subseteq S  \subseteq T \cup T'}} | T \cap T' |  \hat f \bigl(T  \bigr)\hat f \bigl(T'   \bigr) \Bigl( \frac{1-2p_n}{\sqrt{p_n(1-p_n)}} \Bigr)^{|S  \cap T \cap T'|} .
\end{split}
    \end{equation*}
    When \( p \neq 1/2 \), similar expressions appear naturally since for any \( f,g \colon \{ 0,1 \}^n \to \{ 0,1 \} \) and any \( S \subseteq [n] \), we have
\begin{equation*}
    \langle fg, \chi_S \rangle = \sum_{\substack{T,T' \subseteq [n]\colon \\T \Delta T' \subseteq S \subseteq T \cup T'}} \hat f(T) \hat g(T') \Bigl( \frac{1-2p_n}{\sqrt{p_n(1-p_n)}} \Bigr)^{|S \cap T \cap T'|}.
\end{equation*} 
The main reason we do not consider this general case is that it does not work as well with the inequalities we will apply later.
\end{remark}

\begin{proof}[Proof of Lemma~\ref{lemma: edge matrix composition increasing}]
    Let \( f \colon \{ 0,1 \}^n \to \{ 0,1 \} \) be increasing and let \( S \subseteq [n] \). By Lemma~\ref{lemma: edge matrix equality}, we have
    \begin{align*}
        \pi^T Q_{\partial f} \chi_S = \langle Q_{\partial f} \mathbf{1}, \chi_S \rangle.
    \end{align*}  
    Fix some \( x \in \{ 0,1 \}^n \). Since \( f \) is increasing  we have
    \begin{equation} \label{eq: first eq in proof}
        n  Q_{\partial f} \mathbf{1}(x)  
        =  \sum_{i\in [n]} \pigl(p_n \bigl(1-x(i) \bigr)  + (1-p_n) x(i) \pigr)      D_if(x)  .
    \end{equation}
    Note that for any \(i \in [n]\), we have  
\begin{equation*}
    \begin{split}
        &
        \bigl( 1 - x(i) \bigr) \cdot p_n + x(i) \cdot (1-p_n) 
         = 
        p_n(1-p_n)\biggl( 2 + \frac{1-2p_n}{\sqrt{p_n(1-p_n)}}  
        \cdot \frac{x(i)-p_n}{\sqrt{p_n(1-p_n)}}   \biggr)
        \\&\qquad= 
        p_n(1-p_n) \Bigl( 2 + \frac{1-2p_n}{\sqrt{p_n(1-p_n)}} \cdot \chi_{\{ i \}}(x)  \Bigr)
    \end{split}
\end{equation*}
    and that by Lemma~\ref{lemma: Partial i f}, we have
    \begin{equation*}
        D_if(x) 
        = \frac{1}{\sqrt{p_n(1-p_n)}} \sum_{T \subseteq [n] \colon i \not \in T} \hat f \bigl(T \cup \{ i \} \bigr) \chi_T(x).
    \end{equation*}
    Combining these observations with~\eqref{eq: first eq in proof}, we obtain
\begin{align*} 
    &n  Q_{\partial f} \mathbf{1}(x) \cdot  \chi_S (x)
    \\&\qquad =   \sum_{i \in [n]}   \pigl( 2\sqrt{p_n(1-p_n)}  + (1-2p_n) \cdot \chi_{\{ i \}}(x)\pigr)  \biggl[  \, \sum_{T \subseteq [n] \colon i \not \in T} \hat f \bigl(T \cup \{ i \} \bigr) \chi_T(x) \biggr]   \chi_S(x)
    \\&\qquad =   \sum_{i \in [n]}   \Bigl( 2\sqrt{p_n(1-p_n)}\sum_{T \subseteq [n] \colon i \not \in T} \hat f \bigl(T \cup \{ i \} \bigr) \chi_T(x)  + (1-2p_n) \cdot \sum_{T \subseteq [n] \colon i  \in T} \hat f \bigl(T   \bigr) \chi_{T }(x)\Bigr)    \chi_S(x).
    \end{align*} 
    Using~\eqref{eq: orthonormality}, we thus get 
	\begin{align*} 
    	&n  \sum_{x \in \{ 0,1 \}^n} \pi_n(x) Q_{\partial f} \mathbf{1}(x)  \chi_S (x)  
     	=   
      	\sum_{i \in [n]}   \pigl( 2\sqrt{p_n(1-p_n)} \hat f \bigl( S\cup \{ i \} \bigr)   \mathds{1}(i \not \in S)  + (1-2p_n)  \hat f \bigl(S   \bigr)  \mathds{1}(i \in S)\pigr)   
     	\\& \qquad=   
      	2\sqrt{p_n(1-p_n)}  \sum_{i  \in [n] \colon i \not \in  S}    \hat f \bigl( S\cup \{ i \} \bigr) 
      	+
       (1-2p_n)  |S| \hat f \bigl(S   \bigr) .
    \end{align*} 
    This concludes the proof.
\end{proof}

 In the next lemma we, given a Boolean function \( f \colon \{ 0,1 \}^n  \to \{0 , 1 \} \), express the moment generating function of \( C_f \) as a sum whose terms depend on the matrices \( Q_n \) and \( Q_{\partial f }\).
 
\begin{lemma}\label{lemma: generating C II}
    Let \( f \colon \{ 0,1 \}^n \to \{ 0,1 \} \). Then the moment generating function of \( C_f \) is given by
    \begin{equation*}
        \mathbb{E} [e^{sC_f}] = \sum_{k=0}^\infty \frac{n^k}{k!} {\pi_n}^T \Bigl( (Q_n-I_n) + (e^s-1)Q_{\partial f} \Bigr)^k \mathbf{1}, \quad s \in \mathbb{R}.
    \end{equation*}
\end{lemma}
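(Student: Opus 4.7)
The plan is to work from first principles using the Poisson structure driving $(X_t^{(n)})$. The continuous-time chain is generated by $n$ independent rate-one clocks, so the total number $N$ of resampling events in $(0,1)$ is $\mathrm{Poisson}(n)$, and conditional on $\{N=k\}$ the embedded sequence of visited states $(x_0,x_1,\ldots,x_k)$ is a discrete-time Markov chain with transition matrix $Q_n$ started from $\pi_n$. Since almost surely these event times are distinct and finite in number, $f\circ X_t$ can only change at them, so with probability one
\begin{equation*}
    C_f = \sum_{i=1}^{N} \mathds{1}\bigl(f(x_{i-1}) \neq f(x_i)\bigr),
\end{equation*}
and the Riemann-like sum in the definition of $C_f$ stabilises to this count once the mesh is finer than the minimal gap between consecutive events.

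Next, I would use the identity $e^{s\mathds{1}(A)}=1+(e^s-1)\mathds{1}(A)$ to expand, on $\{N=k\}$,
\begin{equation*}
    e^{sC_f} = \prod_{i=1}^{k}\Bigl(1+(e^s-1)\mathds{1}\bigl(f(x_{i-1})\neq f(x_i)\bigr)\Bigr).
\end{equation*}
Taking expectation and successively conditioning along the embedded chain, each factor pairs with the kernel $Q_n$ to produce the single kernel
\begin{equation*}
    M_s(x,y) \coloneqq Q_n(x,y) + (e^s-1) Q_n(x,y)\mathds{1}\bigl(f(x)\neq f(y)\bigr) = Q_n(x,y) + (e^s-1) Q_{\partial f}(x,y),
\end{equation*}
by the very definition of $Q_{\partial f}$. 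A telescoping matrix product along the chain therefore yields $\mathbb{E}[e^{sC_f}\mid N=k] = \pi_n^T M_s^k \mathbf{1}$.

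Summing over the Poisson law of $N$ then gives
\begin{equation*}
    \mathbb{E}[e^{sC_f}] = e^{-n}\sum_{k=0}^{\infty}\frac{n^k}{k!}\,\pi_n^T M_s^k \mathbf{1} = e^{-n}\,\pi_n^T e^{nM_s}\mathbf{1}.
\end{equation*}
Because $-nI_n$ commutes with $nM_s$, this equals $\pi_n^T e^{n(M_s-I_n)}\mathbf{1}$, and substituting $M_s - I_n = (Q_n-I_n) + (e^s-1)Q_{\partial f}$ followed by Taylor-expanding the matrix exponential delivers the claimed identity. As a sanity check, at $s=0$ the right-hand side reduces to $\pi_n^T e^{n(Q_n-I_n)}\mathbf{1} = \pi_n^T\mathbf{1} = 1$, as it should.

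The only real obstacle is the initial reduction of the limit definition of $C_f$ to a count of transitions of the embedded chain; this is bookkeeping that relies on the Poisson event times being isolated and simple almost surely, so that for any sufficiently fine partition each subinterval contains at most one jump. Every subsequent step is routine manipulation of the matrix exponential together with the Markov property of the embedded chain.
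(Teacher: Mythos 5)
Your proposal is correct and follows essentially the same route as the paper: condition on the Poisson($n$) number of resampling events, identify the conditional moment generating function as $\pi_n^T(Q_n+(e^s-1)Q_{\partial f})^k\mathbf{1}$ (the paper writes this as $\pi_n^T(Q_f+e^sQ_{\partial f})^k\mathbf{1}$, which is the same matrix), and absorb the factor $e^{-n}$ via the commuting matrix exponential identity. Your extra care in reducing the limit definition of $C_f$ to the transition count of the embedded chain is a point the paper passes over silently, but it is the same proof.
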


\begin{proof}
    Let \( f \colon \{ 0,1 \}^n \to \{ 0,1 \} \) and let \( T_n \) be the (random) number of times in \( (0,1) \) when bits are resampled. By definition, \( T_n \) has a Poisson distribution with rate \( n \). This implies in particular that
    \begin{equation}\label{eq: mom gen fcn 1}
        \mathbb{E} [e^{sC_f}] = \mathbb{E} \Bigl[ \mathbb{E}\bigl[e^{sC_f}\mid T_n\bigl]\Bigl] = \sum_{k=0}^\infty \frac{e^{-n}n^k}{k!}\mathbb{E} \bigl[  e^{sC_f}\mid T_n=k\bigl].
    \end{equation}
    We now express \(  \mathbb{E}\bigl[  e^{sC_f}\mid T_n=k\bigl] \) in terms of the matrices \( Q_f \) and \( Q_{\partial f} \).  To this end, note first that each update of \( X^{(n)} \) corresponds to an entry  of \( Q_n.  \) Moreover, we have \( Q_n = Q_f + Q_{\partial f} \), and for each \( x,y \in \{ 0,1 \}^n \), we have either \( Q_n(x,y) = Q_f(x,y)\) and \( Q_{\partial f}(x,y)=0 \), or \( Q_n(x,y) = Q_{\partial f}(x,y)\) and \( Q_{f}(x,y)=0. \) Finally, note that if \( X^{(n)} \) jumps from \( x \) to \( y \) and \( Q_f(x,y) \neq 0 \), then we will get a contribution of \( 1 \) to \( e^{sC_f} \), while if \( Q_{\partial f}(x,y) \neq 0 \) then we get a contribution of \( e^s \) to \( e^{sC_f}. \) Using these observations, we find that
    \begin{equation}\label{eq: mom gen fcn 2}
        \begin{split}
            &\mathbb{E}\bigl[  e^{sC_f}\mid T_n=k\bigl] = \pi_n^T (Q_f + e^s Q_{\partial f})^k \mathbf{1}  
            = \pi_n^T \bigl((Q_f+Q_{\partial f}) + (e^s-1) Q_{\partial f}\bigr)^k \mathbf{1}
            \\&\qquad = \pi_n^T \bigl(Q_n + (e^s-1) Q_{\partial f}\bigr)^k \mathbf{1}.
        \end{split}
    \end{equation}
    Combining~\eqref{eq: mom gen fcn 1} and~\eqref{eq: mom gen fcn 2}, we obtain
    \begin{equation*}  
        \mathbb{E}[e^{sC_f}] 
        = {\pi_n}^T\sum_{k=0}^\infty \frac{e^{-n} n^k}{k!} \bigl(Q_n + (e^s-1) Q_{\partial f}\bigr)^k \mathbf{1}
        = \sum_{k=0}^\infty \frac{n^k}{k!} \pi_n^T \bigl( (Q_n-I_n) + (e^s-1)Q_{\partial f} \bigr)^k \mathbf{1}.
    \end{equation*}
    This concludes the proof.
\end{proof}

In the next lemma, we use the moment generating function of \( C_f \) given in Lemma~\ref{lemma: generating C II} to give expressions for the first and second moment of \( C_f \).

\begin{lemma}\label{lemma: moments II}
    Let \( f \colon \{ 0,1 \}^n \to \{ 0,1 \} \). Then
    \begin{equation}\label{eq: Cf expansion}
        \mathbb{E} [C_f] = n \cdot \pi_n^T    Q_{\partial f} \mathbf{1} 
    \end{equation}
    and
    \begin{equation}\label{eq: second moment I}
        \mathbb{E} \bigl[C_f^2\bigr] =  \mathbb{E}[C_f] + 2  \sum_{k=2}^\infty  \frac{n^k}{k!} {\pi_n}^T Q_{\partial f} (Q_n-I_n)^{k-2} Q_{\partial f}\mathbf{1}.
    \end{equation}
\end{lemma}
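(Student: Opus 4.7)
The strategy is to differentiate the moment generating function from Lemma~\ref{lemma: generating C II} term by term at $s=0$, using the identities
\begin{equation*}
    (Q_n-I_n)\mathbf{1} = \mathbf{0} \qquad \text{and} \qquad \pi_n^T(Q_n-I_n) = \mathbf{0}^T
\end{equation*}
to kill all but a handful of summands. The first identity holds because $Q_n$ is a stochastic matrix (its rows sum to $1$), and the second because $\pi_n$ is the stationary distribution of $Q_n$. Writing $L_s \coloneqq (Q_n-I_n) + (e^s-1) Q_{\partial f}$, so that $L_0 = Q_n - I_n$ and $L_s' = L_s'' = e^s Q_{\partial f}$, we have by Lemma~\ref{lemma: generating C II}
\begin{equation*}
    \mathbb{E}[e^{sC_f}] = \sum_{k=0}^\infty \frac{n^k}{k!} \pi_n^T L_s^k \mathbf{1}.
\end{equation*}
Since $\mathbb{E}[e^{sC_f}] = \sum_{m \geq 0} \mathbb{E}[C_f^m]s^m/m!$, the first and second moments are obtained by differentiating once and twice at $s=0$.

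For the first moment I would apply the product rule to $L_s^k$, obtaining
\begin{equation*}
    \frac{d}{ds}L_s^k\Big|_{s=0} = \sum_{j=0}^{k-1} (Q_n-I_n)^j\,Q_{\partial f}\,(Q_n-I_n)^{k-1-j}.
\end{equation*}
Contracting with $\pi_n^T$ on the left and $\mathbf{1}$ on the right, the identities above force $j=0$ (else $\pi_n^T(Q_n-I_n)^j = 0$) \emph{and} $k-1-j=0$ (else $(Q_n-I_n)^{k-1-j}\mathbf{1}=0$). Only $k=1$ survives, giving $\mathbb{E}[C_f] = n\,\pi_n^T Q_{\partial f}\mathbf{1}$, which is~\eqref{eq: Cf expansion}.

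For the second moment I would differentiate once more and collect the three resulting families of terms corresponding to hitting the two $L_s$-factors and the $e^s$ in $L_s'$:
\begin{align*}
    \frac{d^2}{ds^2}L_s^k\Big|_{s=0}
    ={}& \sum_{j=0}^{k-1}\sum_{i=0}^{j-1} (Q_n-I_n)^i Q_{\partial f}(Q_n-I_n)^{j-1-i}Q_{\partial f}(Q_n-I_n)^{k-1-j} \\
    &+ \sum_{j=0}^{k-1}(Q_n-I_n)^j Q_{\partial f}(Q_n-I_n)^{k-1-j} \\
    &+ \sum_{j=0}^{k-1}\sum_{i=0}^{k-2-j} (Q_n-I_n)^j Q_{\partial f}(Q_n-I_n)^i Q_{\partial f}(Q_n-I_n)^{k-2-j-i}.
\end{align*}
After sandwiching between $\pi_n^T$ and $\mathbf{1}$, the middle single-$Q_{\partial f}$ sum survives only at $k=1$ (contributing $\pi_n^T Q_{\partial f}\mathbf{1}$, i.e.\ $\mathbb{E}[C_f]/n$ times $n$). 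In the first double sum, the leftmost $(Q_n-I_n)^i$ forces $i=0$ and the rightmost forces $j=k-1$, leaving $\pi_n^T Q_{\partial f}(Q_n-I_n)^{k-2}Q_{\partial f}\mathbf{1}$ for each $k \geq 2$; the third double sum is symmetric and contributes the same quantity. Multiplying by $n^k/k!$ and summing in $k$ yields~\eqref{eq: second moment I}.

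The main obstacle is purely bookkeeping: keeping track of the three ways in which the second derivative hits the product $L_s \cdots L_s$ and then verifying, for each multi-index, that the "killing" identities collapse the sum onto boundary terms. Everything else is the elementary observation that $Q_n\mathbf{1}=\mathbf{1}$ and $\pi_n^T Q_n = \pi_n^T$, together with differentiation under the (absolutely convergent) power series.
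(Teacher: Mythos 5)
Your proposal is correct and follows essentially the same route as the paper: differentiate the moment generating function from Lemma~\ref{lemma: generating C II} at \( s=0 \) and use the annihilation identities \( \pi_n^T(Q_n-I_n)=\mathbf{0}^T \) and \( (Q_n-I_n)\mathbf{1}=\mathbf{0} \) to collapse the product-rule expansion onto the boundary terms. The paper simply states the outcome of the differentiation without the explicit multi-index bookkeeping, which you have carried out correctly.
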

 
\begin{proof}
    By Lemma~\ref{lemma: generating C II}, for any \( s \in \mathbb{R} \), we have
    \begin{equation*}
        \mathbb{E} \bigl[e^{sC_f}\bigr] = \sum_{k=0}^\infty \frac{n^k}{k!} \pi_n^T \bigl( (Q_n-I_n) + (e^s-1)Q_{\partial f} \bigr)^k \mathbf{1}.
    \end{equation*}
    Differentiating with respect to \( s \) and using that \( \pi_n^T (Q_n-I_n) = (Q_n-I_n) \mathbf{1} = \mathbf{0} \), we get
    \begin{align*}
        &\mathbb{E} [C_f] 
        =
        \biggl[ \frac{d}{ds } e^{sC_f} \biggr]_{s = 0} 
        =
        \biggl[ \frac{d}{ds } \sum_{k=0}^\infty \frac{n^k}{k!} \pi_n^T \bigl( (Q_n-I_n) + (e^s-1)Q_{\partial f} \bigr)^k \mathbf{1} \biggr]_{s = 0} 
        =  n  \cdot  \pi_n^T Q_{\partial f} \mathbf{1}
    \end{align*}
    and
    \begin{align*}
        &\mathbb{E} \bigl[C_f^2 \bigr] 
        =
        \biggl[ \frac{d^2}{ds^2 } e^{sC_f} \biggr]_{s = 0} 
        = \biggl[ \frac{d^2}{ds^2}  \sum_{k=0}^\infty \frac{n^k}{k!} \pi_n^T \bigl( (Q_n-I_n) + (e^s-1)Q_{\partial f} \bigr)^k \mathbf{1} \biggr]_{s=0} 
        \\&\qquad=  n  \cdot  \pi_n^T Q_{\partial f}\mathbf{1} + 2 \sum_{k=2}^\infty \frac{n^k}{k!} \pi_n^T Q_{\partial f} (Q_n-I_n)^{k-2} Q_{\partial f} \mathbf{1}
    \end{align*} 
    which is the desired conclusion.
\end{proof}

 In the next lemma, we expand the terms in~\eqref{eq: second moment I} to get a simpler expression for the second moment of \( C_f \).
 
\begin{lemma}\label{lemma: not increasing nonsymmetric}
    If \( f \colon \{ 0,1 \}^n \to \{ 0,1 \}\), then
    \begin{align*}
        &\mathbb{E} [C_f^2]     
        = \mathbb{E}[C_f] +   \mathbb{E}[C_f]^2 + 2\sum_{S \subseteq [n] \colon S \neq \emptyset}  \frac{e^{-|S|}-(1-|S|)}{|S|^2} \bigl( n \cdot  \pi_n^T Q_{\partial f} \chi_S \bigr)^2. 
    \end{align*} 
\end{lemma}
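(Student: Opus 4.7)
My plan is to start from the formula
\[
\mathbb{E}[C_f^2] = \mathbb{E}[C_f] + 2\sum_{k=2}^\infty \frac{n^k}{k!}\, \pi_n^T Q_{\partial f}(Q_n-I_n)^{k-2}Q_{\partial f}\mathbf{1}
\]
given by Lemma~\ref{lemma: moments II}, and reduce the infinite sum to a Fourier expansion over subsets $S \subseteq [n]$. The key observation is that $\{\chi_S\}_{S \subseteq [n]}$ is an orthonormal eigenbasis for $Q_n - I_n$, so once we expand $Q_{\partial f}\mathbf{1}$ in this basis we can resolve the middle $(Q_n-I_n)^{k-2}$ into a scalar factor.

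Concretely, since $\{\chi_S\}_{S\subseteq[n]}$ is orthonormal,
\[
Q_{\partial f}\mathbf{1} \;=\; \sum_{S\subseteq [n]} \bigl\langle Q_{\partial f}\mathbf{1},\chi_S\bigr\rangle \chi_S \;=\; \sum_{S\subseteq [n]} \bigl(\pi_n^T Q_{\partial f} \chi_S\bigr)\chi_S,
\]
where the second equality uses Lemma~\ref{lemma: edge matrix equality}. Applying $(Q_n-I_n)^{k-2}$ term-by-term via \eqref{eq: eigenvalues} gives
\[
(Q_n-I_n)^{k-2} Q_{\partial f} \mathbf{1} \;=\; \sum_{S \subseteq [n]} \bigl(\pi_n^T Q_{\partial f} \chi_S\bigr) \Bigl(-\tfrac{|S|}{n}\Bigr)^{k-2} \chi_S.
\]
Hitting this with $Q_{\partial f}$ on the left and pairing with $\pi_n^T$ on the outside, and using Lemma~\ref{lemma: edge matrix equality} once more, each term collapses to a square:
\[
\pi_n^T Q_{\partial f} (Q_n-I_n)^{k-2} Q_{\partial f}\mathbf{1} \;=\; \sum_{S\subseteq [n]}\Bigl(-\tfrac{|S|}{n}\Bigr)^{k-2}\bigl(\pi_n^T Q_{\partial f}\chi_S\bigr)^2.
\]

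The final step is to swap the order of summation (justified by absolute convergence for each fixed $n$, as the inner sum over $S$ is finite) and evaluate the resulting series in $k$. For $S = \emptyset$, only the $k=2$ term survives (with $0^0 = 1$), contributing $\tfrac{n^2}{2}(\pi_n^T Q_{\partial f}\mathbf{1})^2 = \tfrac{1}{2}\mathbb{E}[C_f]^2$ by \eqref{eq: Cf expansion}; doubled, this accounts for the $\mathbb{E}[C_f]^2$ term. For $S \neq \emptyset$, factoring out $n^2/|S|^2$ leaves
\[
\sum_{k=2}^\infty \frac{(-|S|)^k}{k!} \;=\; e^{-|S|} - 1 + |S|,
\]
which gives precisely the factor $\bigl(e^{-|S|}-(1-|S|)\bigr)/|S|^2$ multiplying $(n \cdot \pi_n^T Q_{\partial f}\chi_S)^2$ in the statement.

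There is no real obstacle here beyond careful bookkeeping; the mild subtlety is the $S = \emptyset$ versus $S\neq\emptyset$ split, which must be handled separately because the symbolic expression $(e^{-|S|}-(1-|S|))/|S|^2$ is indeterminate at $|S|=0$, while the corresponding Taylor series in the intermediate step is perfectly well-defined and yields the $\mathbb{E}[C_f]^2$ contribution.
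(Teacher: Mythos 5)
Your proposal is correct and follows essentially the same route as the paper: expand \( Q_{\partial f}\mathbf{1} \) in the orthonormal eigenbasis \( \{\chi_S\} \) via Lemma~\ref{lemma: edge matrix equality}, diagonalize \( (Q_n-I_n)^{k-2} \) using~\eqref{eq: eigenvalues}, and resum the series in \( k \), with the \( S=\emptyset \) term producing \( \mathbb{E}[C_f]^2 \). Your explicit handling of the indeterminate \( S=\emptyset \) case is, if anything, slightly more careful than the paper's.
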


\begin{proof} 
    Let \( f \colon \{ 0,1 \}^n \to \{ 0,1 \}.\) 
    Since
    \begin{align*} 
    & Q_{\partial f} \mathbf{1} (x) 
    =\sum_{S\subseteq [n]} \langle Q_{\partial f} \mathbf{1} ,\chi_S \rangle \chi_S
    \end{align*}  
    it follows from Lemma~\ref{lemma: edge matrix equality} that
    \begin{align*} 
    & Q_{\partial f} \mathbf{1} (x) 
    =\sum_{S\subseteq [n]} \bigl(\pi^T_n Q_{\partial f} \chi_S \bigr) \chi_S.
    \end{align*}  
     At the same time, for any \( S \subseteq [n] \), by~\eqref{eq: eigenvalues}, we have 
    \begin{align*} 
    &  \pi_n^T Q_{\partial f} (Q_n-I_n)^{k-2}  \chi_S
    =
    \pi_n^T Q_{\partial f} \bigl( (Q_n-I_n)^{k-2}  \chi_S \bigr)
    =
     \Bigl( \frac{-|S|}{n} \Bigr)^{k-2}  \pi_n^T Q_{\partial f}  \chi_S.
    \end{align*}  
    Combining these observations, we obtain
    \begin{align*} 
    &\pi_n^T Q_{\partial f} (Q_n-I_n)^{k-2} Q_{\partial f} \mathbf{1} 
    =
    \pi_n^T Q_{\partial f} (Q_n-I_n)^{k-2} \sum_{S\subseteq [n]} \bigl( \pi^T_n P_{\partial f} \chi_S\bigr) \chi_S
    \\&\qquad=
    \sum_{S\subseteq [n]} \bigl( \pi^T_n P_{\partial f} \mathbf{1}\bigr) \pi_n^T Q_{\partial f} (Q_n-I_n)^{k-2}  \chi_S
    =
    \sum_{S\subseteq [n]} \bigl( \pi^T_n P_{\partial f} \chi_S \bigr)^2 \Bigl( \frac{-|S|}{n} \Bigr)^{k-2}
    \\&\qquad=
    \frac{1}{n^2}\sum_{S \subseteq [n]} \Bigl( \frac{-|S|}{n}\Bigr)^{k-2}
    \bigl( n \cdot \pi^T Q_{\partial f} \chi_S \bigr)^2.
    \end{align*}   
    Using Lemma~\ref{lemma: moments II}, it immediately follows that 
\begin{align*}
	&\mathbb{E} [C_f^2] 
     =   
     \mathbb{E}[C_f] + 2\sum_{k=2}^\infty  \frac{n^k}{k!} \cdot  \frac{1}{n^2}\sum_{S \subseteq [n]} \Bigl( \frac{-|S|}{n}\Bigr)^{k-2}
    \bigl( n \cdot \pi^T Q_{\partial f} \chi_S \bigr)^2
     \\&\qquad =   
     \mathbb{E}[C_f] 
     + 
    \bigl( n \cdot \pi^T Q_{\partial f} \chi_\emptyset \bigr)^2 
    +
    2\sum_{S \subseteq [n] \colon S \neq \emptyset}  \bigl( n \cdot \pi^T Q_{\partial f} \chi_S \bigr)^2 \cdot \frac{1}{|S|^2}  \sum_{k=2}^\infty  \frac{n^k}{k!}   \Bigl( \frac{-|S|}{n}\Bigr)^{k}   
     \\&\qquad =   
      \mathbb{E}[C_f]  +  
     \bigl( n \cdot \pi^T Q_{\partial f} \chi_\emptyset    \bigr)^2
     +
     2\sum_{S \subseteq [n] \colon S \neq \emptyset} \bigl( n  \cdot \pi_n^T Q_{\partial f} \chi_S \bigr)^2 \cdot  \frac{e^{-|S|}-(1-|S|)}{|S|^2}.
\end{align*}
Recalling that by Lemma~\ref{lemma: moments II} we have 
\begin{equation*}
    \mathbb{E}[C_f] = n \cdot \pi^T Q_{\partial f} \mathbf{1}= n \cdot \pi^T Q_{\partial f} \chi_\emptyset,
\end{equation*}
the desired conclusion follow.
\end{proof} 

We now combine the lemmas in this section to give a  proof of  Proposition~\ref{proposition: increasing nonsymmetric}.
\begin{proof}[Proof of Proposition~\ref{proposition: increasing nonsymmetric}]
    By Lemma~\ref{lemma: edge matrix composition increasing}, for any \( S \subseteq [n] \) we have
    \begin{equation*} 
        \pi_n^T Q_{\partial f} \chi_S
        = \frac{1}{n} \Bigl[ (1-2p_n)  |S| \hat f(S  )  +  2\sqrt{p_n(1-p_n)}
        \sum_{i \not \in S}  \hat f \bigl(S \cup \{ i \} \bigr)
        \Bigr].
    \end{equation*} 
    Combining this with Lemma~\ref{lemma: not increasing nonsymmetric}, we obtain 
	\begin{align*}
		&\mathbb{E} [C_f^2]     
     	=   \mathbb{E}[C_f] +   \mathbb{E}[C_f]^2 + 2\sum_{S \subseteq [n] \colon S \neq \emptyset}  \frac{e^{-|S|}-(1-|S|)}{|S|^2} \bigl( n  \pi_n^T Q_{\partial f} \chi_S \bigr)^2
     	\\&\qquad=
     	\mathbb{E}[C_f] + \mathbb{E}[C_f]^2
        + 
     	\sum_{S \subseteq [n] \colon S \neq \emptyset } \frac{e^{-|S|}-(1-|S|)}{|S|^2} \\[-1.5ex]&\qquad\qquad\qquad\qquad\qquad\qquad\qquad\qquad\qquad \cdot \Bigl[  (1-2p_n)  |S| \hat f(S )  + 2\sqrt{p_n(1-p_n)} \sum_{i \in [n] \colon i \not \in S} \hat f \bigl(S \cup \{ i \} \bigr)
            \Bigr]^2 ,
	\end{align*}  
	which is the desired conclusion.
\end{proof}

\section{Proof of the main result}\label{sec: proof of main result}

In this section, we give a proof of Theorem~\ref{theorem: increasing gen p}.  The proof of this result will be divided into two lemmas, which we now state and prove.

\begin{lemma}\label{lemma: the last lemma}
    Let \( (f_n)_{n \in \mathcal{N}} \) be a sequence of increasing Boolean functions with \( \lim_{n \to \infty} \mathbb{E}[C_{f_n}] = \infty \), and assume that there is a constant \( C>0 \) such that for all \( n \in \mathcal{N} \) we have 
    \begin{equation}\label{eq: nontame inequality}
        p_n(1-p_n) n \Var(f_n) \leq C I(f_n)^2.
    \end{equation}
    Then \( (f_n)_{n\in \mathcal{N}} \) is not tame.
\end{lemma}

\begin{remark}\label{rem: rem with referee addition}
    By, e.g., Lemma~6.1 in~\cite{fk1996}, any increasing Boolean function \( f_n \colon \{ 0,1 \}^n \to \{0,1 \} \) satisfies \( I(f) \leq \sqrt{np_n} \). 
    Consequently, by Lemma~\ref{lemma: the last lemma}, any  sequence of increasing Boolean functions which is close to maximizing the total influence must be non-tame.
    
    If \( (f_n)_{n \in \mathcal{N}} \) is a sequence of regular and increasing Boolean functions with \( p_n = 1/2 \) and \( \sum I_i(f_n)^2 > c > 0 \) for all \( n \in \mathcal{N} \), then, as observed in the proof of Theorem~1.7 in~\cite{bks}, Theorem~1.1 in~\cite{Tal96} immediately implies that the functions \(f_n \) have uniformly positive correlations with the Majority function. See also Proposition~12.45 and~Theorem~12.51~in~\cite{gs}.
\end{remark}

\begin{proof}[Proof of Lemma~\ref{lemma: the last lemma}]
    By the Paley-Zygmund inequality, for any \( \theta \in (0,1) \) we have 
    \begin{equation*}
        P\bigl(C_{f_n} > \theta \mathbb{E}[C_{f_n}]\bigr) \geq (1 - \theta) \frac{\mathbb{E}[C_{f_n}]^2}{\mathbb{E}[C_{f_n}^2]}.
    \end{equation*}
    Since \( \lim_{n \to \infty} \mathbb{E}[C_{f_n}] = \infty \), it immediately follows that  if there is a constant \( C'>0 \) such that  
    \begin{equation}\label{eq: goal}
    	\mathbb{E}[C_{f_n}^2] \leq C'\mathbb{E}[C_{f_n}]^2 
    \end{equation}
    for all sufficiently large \( n \in \mathcal{N}, \) then \( (f_n)_{n \in \mathcal{N}} \)  is not tame.
    Consequently, if we can show that~\eqref{eq: goal} holds, then the desired conclusion will follow.
    To this end, fix some \( n \in \mathcal{N} \), and note that by~Proposition~\ref{proposition: increasing nonsymmetric}, since \( f_n \) is increasing, we have
    \begin{align*}
        \mathbb{E} [C_{f_n}^2]    
        &= 
        \mathbb{E}[C_{f_n}] + \mathbb{E}[C_{f_n}]^2
        + 
        2\sum_{S \subseteq [n] \colon S \neq \emptyset } \frac{e^{-S}-(1-|S|)}{|S|^2} \\&\qquad\qquad\qquad\cdot \Bigl[  (1-2p_n)  |S| \hat f_n(S  )  + 2\sqrt{p_n(1-p_n)} \sum_{i \not \in S} \hat f_n \bigl(S \cup \{ i \} \bigr)  
        \Bigr]^2 .
    \end{align*}
    Since \( e^{-|S|} \leq 1 \) for all \( S \subseteq [n] \), this implies that
    \begin{align*}
        \mathbb{E} [C_{f_n}^2]    
        & \leq
        \mathbb{E}[C_{f_n}] + \mathbb{E}[C_{f_n}]^2
        \\&\qquad+ 
        2\sum_{S \subseteq [n] \colon S \neq \emptyset } \frac{1}{|S|} \Bigl[  (1-2p_n)  |S| \hat f_n(S  )  + 2\sqrt{p_n(1-p_n)} \sum_{i \not \in S} \hat f_n \bigl(S \cup \{ i \} \bigr)  
        \Bigr]^2 .
    \end{align*}
    If we apply the inequality \( (a+b)^2 \leq 2(a^2 + b^2) \), we see that
    \begin{equation}\label{eq: inline eq 1}
    	\begin{split}
    		\mathbb{E} [C_{f_n}^2]    
        		&\leq
        		\mathbb{E}[C_{f_n}] +  \mathbb{E}[C_{f_n}]^2
        		+ 
        		4(1-2p_n)^2 \sum_{S \subseteq [n] \colon S \neq \emptyset }     |S| \hat f_n(S  )^2 
        		\\&\qquad+ 
        		16p_n(1-p_n)\sum_{S \subseteq [n] \colon S \neq \emptyset } \frac{1}{|S|}     \Bigl(\sum_{i\in [n] \colon i \not \in S} \hat f_n \bigl(S \cup \{ i \} \bigr)  \Bigr)^2. 
    	\end{split}	
    \end{equation}
    By the Cauchy-Schwarz inequality, we have
    \begin{equation}\label{eq: inline eq 2}
    	\begin{split}
    		&\sum_{S \subseteq [n] \colon S \neq \emptyset } \frac{1}{|S|}     \Bigl(\sum_{i\in [n] \colon i \not \in S} \hat f_n \bigl(S \cup \{ i \} \bigr)  \Bigr)^2
    		\leq 
    		\sum_{S \subseteq [n] \colon S \neq \emptyset } \frac{n-|S|}{|S|}     \sum_{i\in [n] \colon i \not \in S} \hat f_n \bigl(S \cup \{ i \} \bigr)^2
    		\\&\qquad=
    		\sum_{T \subseteq [n] \colon |T| \geq 2} \hat f(T)^2 \cdot \frac{|T|(n-(|T|-1))}{|T|-1}    
    		\leq
    		\sum_{T \subseteq [n] \colon |T| \geq 2} \hat f(T)^2 \cdot 2 n.
    	\end{split}
    \end{equation} 
    Now note that 
    \begin{equation*}
    	\sum_{S\subseteq [n]\colon S \neq \emptyset} |S|\hat f_n(S)^2 = I(f_n) = \mathbb{E}[C_n]
    \end{equation*}
    \begin{equation*}
    	\sum_{S\subseteq [n]\colon |S|\geq 2} \hat f_n(S)^2 \leq \sum_{S\subseteq [n]\colon S \neq \emptyset} \hat f_n(S)^2 = \Var(f_n).
    \end{equation*}
    Combining~\eqref{eq: inline eq 1} and~\eqref{eq: inline eq 2}, we thus obtain
    \begin{align*}
        \mathbb{E} [C_{f_n}^2]    
        &\leq
        \mathbb{E}[C_{f_n}] +  \mathbb{E}[C_{f_n}]^2
        + 
        4(1-2p_n)^2 \mathbb{E}[C_{f_n}]
        +
        32p_n(1-p_n) n \Var(f_n).
    \end{align*}
    Using~\eqref{eq: nontame inequality} and recalling that \( \mathbb{E}[C_{f_n}] = I(f_n) \), we thus obtain
    \begin{align*}
        \mathbb{E} [C_{f_n}^2]    
        &\leq
        \mathbb{E}[C_{f_n}] +  \mathbb{E}[C_{f_n}]^2
        + 
        4(1-2p_n)^2 \mathbb{E}[C_{f_n}]
        +
        32C \mathbb{E}[C_{f_n}]^2.
    \end{align*}
        Since \( \lim_{n \to \infty} \mathbb{E}[C_n] = \infty \), we have \( \mathbb{E}[C_n] \leq \mathbb{E}[C_n]^2 \) for all sufficiently large \( n \in \mathcal{N} \),  and hence for such \( n \),~\eqref{eq: goal} holds with, e.g., \( C' = 7+32C \). This concludes the proof.
 \end{proof}

It is well-known that, when \( p_n =1/2 \), the so-called \emph{Tribes function} (see, e.g., Section 4.2 in~\cite{odonnell}) is increasing and transitive and satisfies \( I(f_n) = C \log n \). Moreover, a sequence of Tribes functions is noise sensitive, and hence non-tame by Proposition~1.17 in~\cite{js2006}. In particular, this shows that the inequality in Lemma~\ref{lemma: the last lemma} does not hold for all non-tame sequences of increasing and transitive Boolean functions. The main idea in the proof of Theorem~\ref{theorem: increasing gen p} will instead be to show that~\eqref{eq: nontame inequality} holds for all sequences of regular Boolean functions which are not noise sensitive. This is the main motivation for the next lemma.

\begin{lemma}\label{lemma: the very last lemma}
    Let \( (f_n)_{n \in \mathcal{N}} \) be a non-degenerate sequence of regular and increasing Boolean functions. Assume further that \( p_n  \leq 1/2\) and \( \liminf_{n \to \infty} np_n = \infty\). Then either \( (f_n)_{n \in \mathcal{N}} \) is not tame, or 
    	\begin{equation*}
    		\lim_{n \to \infty} p_n^{-1}\sum_{i=1}^n I_i(f_n)^2 = 0.
    	\end{equation*}
\end{lemma}

\begin{proof}[Proof of Lemma~\ref{lemma: the very last lemma}]
    Assume that \( \limsup_{n \to \infty} p_n^{-1}\sum_{i \in [n]} I_i(f_n)^2 > 0 \). We need to show that in this case, \( (f_n)_{n \geq 1} \) is not tame. 
    To this end, note that when \( \limsup_{n \to \infty} p_n^{-1}\sum_{i \in [n]} I_i(f_n)^2 > 0 \), there is a constant \( D> 0 \) and an infinite increasing subsequence \( \mathcal{N}' = (n_1,n_2, \ldots)\) of \( \mathcal{N} \) such that \( \sum_{i \in [n]}  I_i(f_n)^2 > D p_n \) for all   \( n \in \mathcal{N}' .\)
    Since \( f_n \) is regular for all \( n \in \mathcal{N} \), and \( \mathcal{N}' \) is a subsequence of \( \mathcal{N} \), it follows that, for all \( n \in \mathcal{N}' \), we have
    \begin{equation*}
        I(f_n)^2 = \bigl(n I_1(f_n)\bigr)^2 = n \cdot   nI_1(f_n)^2
        = n\cdot  \sum_{i \in [n]} I_i(f_n)^2
        \geq n \cdot  D p_n = D \cdot p_n n.
    \end{equation*}
    Since \( \Var (f_n) (1-p_n) < 1 \) for all \( n \in \mathcal{N} \), it follows that 
    \begin{equation*}
    	p_n(1-p_n)n \Var(f_n) \leq D^{-1}I(f_n)^2.
    \end{equation*}
    On the other hand, since, by assumption, \( \lim_{n \to \infty} np_n = \infty \), \( p_n\leq 1/2 \), and \( (f_n)_{n \in \mathcal{N}} \) is non-degenerate, it follows from the previous equation to together with the observation that \( \mathbb{E}[C_f] = I(f_n) \) that \( \lim_{n \to \infty} \mathbb{E}[C_f] = \infty. \)
    Consequently, we can apply Lemma~\ref{lemma: the last lemma} to deduce that \( (f_n)_{n \in \mathcal{N}'} \) is not tame. Since \( \mathcal{N}' \) is a subsequence of \( \mathcal{N} \), it follows that \( (f_n)_{n \in \mathcal{N}} \) is not tame. This concludes the proof.
 \end{proof}
  
\begin{proof}[Proof of Theorem~\ref{theorem: increasing gen p}]
    Let \( (f_n)_{n \in \mathcal{N}} \) be a non-degenerate sequence of regular and increasing Boolean functions, and assume that \( 0 < \liminf_{n \to \infty} p_n \leq \limsup_{n \to \infty} < 1 \).
    Note that if \( (f_n)_{n \in \mathcal{N}} \) is noise sensitive then, by Proposition 1.17 in~\cite{js2006},  \( (f_n)_{n \in \mathcal{N}} \) is volatile and hence not tame. Consequently, we can assume that \( (f_n)_{n \in \mathcal{N}} \) is not noise sensitive. %
    Since \( 0<\liminf_{n \to \infty} p_n \leq \limsup_{n \to \infty} p_n < 1\), it follows from Theorem~7 in~\cite{gk2018} that \( \limsup_{n \to \infty} \sum_{i \in [n]} I_i(f_n)^2 > 0 \). The desired conclusion thus follows from applying Lemma~\ref{lemma: the very last lemma}.
\end{proof}

\begin{remark}
    The proof of Theorem~\ref{theorem: increasing gen p} does not work when \( \lim_{n \to \infty}p_n = 0 \), even though Theorem~7 in~\cite{gk2018}, as well as the similar Theorem~1.3 in~\cite{b}, holds also in this case. The reason for this is that when \( \lim_{n \to \infty} p_n = 0 \), the lower bound on \( p_n^{-1}\sum_{i \in [n]} I_i(f_n)^2 \) provided by these theorems are too weak to be used in conjunction with Lemma~\ref{lemma: the very last lemma}. 
    Theorem~I.5 in~\cite{lm2019} gives an alternative to Theorem~7 in~\cite{gk2018} for sequences of increasing Boolean functions when \( (p_n)_{n \geq 1} \) is given by \( p_n = n^{-(k-1)/k} \) for some even number \( k \geq 0 \). However,  this theorem has additional assumptions which, e.g., the counter-example given in~\cite{f2020} does not satisfy. More important for us however, it does not cover the range of \( (p_n)_{n \geq 1} \) where we have neither counter-examples nor a positive result.
\end{remark}

\section*{Acknowledgements}

The author would like to thank Gil Kalai and Jeffrey E. Steif for comments on the contents of this paper.
Also, the author is grateful to an anonymous referee for the many useful comments, including suggesting several improvements of the proofs in this paper, especially to the proof of~Lemma~\ref{lemma: edge matrix equality}, and also for pointing out the relationship to the Majority function, now mentioned in Remark~\ref{rem: rem with referee addition}.
Finally, the author is grateful to an anonymous referee on the companion paper~\cite{f2020}, for making several interesting comments of relevance for this paper.

 \end{document}